\newtheorem{tm}{Theorem}[section]
\newtheorem{con}{Conjecture} 
\newtheorem{prop}[tm]{Proposition} \newtheorem{lm}[tm]{Lemma}
\newcommand{\bP}{\ensuremath{\mathbb P}} \DeclareMathOperator{\HS}{HS}
\title[WLP for ACI's generated by uniform powers of general linear
forms] {A classification of the Weak Lefschetz property for almost
  complete intersections generated by uniform powers of general linear
  forms} \subjclass[2010]{Primary: 13E10, 14C20; Secondary: 13C40,
  13C13, 13D40} \keywords{powers of linear forms, general linear
  forms, almost complete intersections, Weak Lefschetz property,
  inverse systems, Hilbert series} \author{Mats Boij}
\email{boij@kth.se} \address{Department of Mathematics\\ KTH - Royal
  Institute of Technology\\ SE-100 44 Stockholm, Sweden}
\author{Samuel Lundqvist} \email{samuel@math.su.se}
\address{Department of Mathematics\\ Stockholm University\\
  SE-106 91 Stockholm, Sweden } \date{}
\begin{document}
\begin{abstract}
  We use Macaulay's inverse system to study the Hilbert series for
  almost complete intersections generated by uniform powers of general
  linear forms. This allows us to give a classification of the Weak
  Lefschetz property for these algebras, settling a conjecture by
  Migliore, Mir\'o-Roig, and Nagel.
\end{abstract}

\maketitle

\section{Introduction}
Let $\Bbbk$ be a field of characteristic zero and let
$\ell_1,\ell_2,\ldots,\ell_r \in \Bbbk[x_1,x_2,\ldots,x_n]$ be general
linear forms. For positive integers $d_1,d_2,\ldots,d_r$, consider the
quotient
$\Bbbk[x_1,x_2,\ldots,x_n]/ \langle
\ell_1^{d_1},\ell_2^{d_2},\ldots,\ell_r^{d_r} \rangle$. This algebra
ties together several areas of contemporary mathematics.

From the algebraic point of view, it is in a natural way linked to the
long-standing conjectures by Fr\"oberg \cite{F} and Iarrobino
\cite{iarrobino} on the Hilbert series of generic forms.

Powers of general linear forms are also tightly connected to the study
of fat points schemes via Macaulay's inverse system, as was noticed by
Emsalem and Iarrobino \cite{emsalem}. This bridge to geometry relates
the study of powers of general linear forms to the Alexander-Hirschowitz 
Theorem \cite{ah, chandler} and the Segre–Gimigliano–Harbourne–Hirschowitz (SGHH) 
Conjecture \cite{sghh}.

In this paper we consider the uniform almost complete intersection
case, that is, algebras of the form
$\Bbbk[x_1,x_2,\ldots,x_n]/(\ell_1^{d},\ell_2^{d},\ldots,\ell_{n+1}^{d}),$
from the perspective of the Weak Lefschetz property.

Recall that a graded algebra $A$ satisfies the Weak Lefschetz property
(WLP) if there exists a linear form $\ell$ such that the
multiplication map $\times \ell:A_i \to A_{i+1}$ has maximal rank for
all degrees $i$, while $A$ satisfies the Strong Lefschetz property
(SLP) if the multiplication map $\times \ell^j:A_i \to A_{i+j}$ has
maximal rank for all $i$ and all $j$.  For an introduction to the
Lefschetz properties, see e.g. \cite{book, atour}.

The WLP for the class of algebras that we consider holds for $n = 1$
and $n=2$, since all graded artinian quotients in one or two variables
have the SLP, the argument being trivial for the univariate case,
while the case of two variables, which requires characteristic zero,
is attributed to Harima, Migliore, Nagel, and Watanabe \cite{HMNW}.
For $n=3$, Schenck and Seceleanu \cite{edim3} showed that the WLP
holds for \emph{any} quotient by an artinian ideal generated by powers
of linear forms. Migliore, Mir\'o-Roig, and Nagel \cite{MMN} showed
that for even $n \geq 4$, the WLP fails for almost complete
intersections generated by uniform powers, except in the case
$(n,d) = (4,2)$. They also gave results in the odd uniform case and in
the mixed degree case, and provided a conjecture for the unproven part
of the odd uniform case.
 
To simplify the statement of the conjecture and the presentation in
this paper in general, we let $R_{n,m,d}$ denote the ring
$\Bbbk[x_1,x_2,\ldots,x_n]/\langle \ell_1^d,\ell_2^d,\ldots,\ell_{m}^d
\rangle$, where $\ell_1,\ell_2,\ldots,\ell_m$ are general linear forms
and where $\Bbbk$ is a field of characteristic zero.

\begin{con} \cite[Conjecture 6.6]{MMN} \label{con1} Let $n\geq9$ be an
  odd integer. Then
  $R_{n,n+1,d}=\Bbbk[x_1,x_2,\ldots,x_n]/\langle
  \ell_1^d,\ell_2^d,\ldots,\ell_{n+1}^d \rangle$ fails the WLP if and
  only if $d>1$. Furthermore, if $n=7$ then $R_{n,n+1,d}$ fails the
  WLP when $d=3$.
\end{con}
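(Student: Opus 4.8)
The plan is to convert the Weak Lefschetz property into a purely numerical comparison of two Hilbert functions, both accessible through Macaulay's inverse system, and then to exhibit a single degree in which the comparison fails.

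First I would fix the model. After a linear change of variables the complete intersection $B=\Bbbk[x_1,\dots,x_n]/\langle\ell_1^d,\dots,\ell_n^d\rangle$ becomes the monomial complete intersection $\Bbbk[x_1,\dots,x_n]/\langle x_1^d,\dots,x_n^d\rangle$, which has the Strong Lefschetz property in characteristic zero. Writing $A=R_{n,n+1,d}=B/\langle g^d\rangle$ with $g=\ell_{n+1}$ a general linear form, the SLP of $B$ forces $\times g^d\colon B_{j-d}\to B_j$ to have maximal rank, so
\[
a_j:=\dim A_j=\max\bigl(0,\;p_j-p_{j-1}\bigr),\qquad \textstyle\sum_j p_j\,t^j=(1+t+\dots+t^{d-1})^{n+1}.
\]
Because both the $\ell_i$ and a general Lefschetz candidate $\ell$ are general, killing $\ell$ passes to $n-1$ variables, so $A/\ell A\cong R_{n-1,n+1,d}$; by semicontinuity it suffices to test this general $\ell$. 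Since the cokernel of $\times\ell\colon A_{j-1}\to A_j$ is $(A/\ell A)_j$, a standard rank count shows that $A$ has the WLP if and only if
\[
c_j:=\dim (R_{n-1,n+1,d})_j=\max\bigl(0,\,a_j-a_{j-1}\bigr)\quad\text{for every }j.
\]
As $c_j$ is a cokernel dimension one always has $c_j\ge\max(0,a_j-a_{j-1})$, so proving failure reduces to producing one degree where this inequality is strict (and for $d=1$ one has $A=\Bbbk$, so the WLP holds trivially, giving the ``only if'' direction for $n\ge 9$).

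The heart of the argument is an \emph{exact} determination of $c_j$ via inverse systems. By the Emsalem--Iarrobino correspondence, $c_j=\dim(I_Z)_j$, where $Z\subset\bP^{n-2}$ is the scheme of the $n+1$ general points dual to $\ell_1,\dots,\ell_{n+1}$, each taken with multiplicity $j-d+1$. The decisive structural fact is that $n+1=(n-2)+3$: these are $N+3$ general points in $\bP^{N}$ with $N=n-2$, and such points lie on a unique rational normal curve of degree $N$. I would exploit this curve (equivalently, the apolarity of the associated binary forms) to compute the postulation of $Z$ exactly, rather than trusting the naive value $p_j-2p_{j-1}+p_{j-2}=a_j-a_{j-1}$. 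The WLP then fails precisely at a degree where $Z$ is \emph{superabundant}, i.e.\ carries a hypersurface not accounted for by the expected count.

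Finally I would pin down the critical degree near the top of the Hilbert function of $A$ — around the center $\tfrac12(n+1)(d-1)$, which is an integer exactly because $n$ is odd — and show that the superabundance of $Z$ there is strictly positive for odd $n\ge 9$ and every $d\ge 2$, and for $(n,d)=(7,3)$, while it disappears for $n=5$ and for $n=7,\ d\neq 3$, yielding the WLP in those cases and hence the exact shape of Conjecture~\ref{con1}. The main obstacle is this last step: controlling the dimension of a linear system of fat points of growing multiplicity on a rational normal curve is genuinely delicate and is exactly where the Segre--Gimigliano--Harbourne--Hirschowitz phenomena enter. The parity of $n$ decides whether the turning point of $a_j$ meets the superabundant degree, and the borderline value $n=7$, failing only at $d=3$, will require a separate explicit computation.
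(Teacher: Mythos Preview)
Your reduction is correct and matches the paper exactly: $R_{n,n+1,d}$ has the WLP if and only if the Hilbert series of $R_{n-1,n+1,d}$ equals the Fr\"oberg series $[(1-t^d)^{n+1}/(1-t)^{n-1}]$, and to show failure it suffices to exhibit one degree where the actual value strictly exceeds the expected one. The Emsalem--Iarrobino translation to fat points and the observation that the $n+1$ dual points in $\bP^{n-2}$ lie on a rational normal curve are also correct; the paper in fact exploits this curve in its handling of the sporadic cases.

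But you have named the gap rather than closed it: you say that ``controlling the dimension of a linear system of fat points of growing multiplicity on a rational normal curve is genuinely delicate,'' and your proposal contains no mechanism for doing so. This is precisely the content of the theorem. The paper's key idea is to \emph{avoid} computing the postulation. Rather than compare Hilbert-function values degree by degree, it compares only the \emph{top degree} of the two series. On one side, it shows $R_{n-1,n+1,d}$ is nonzero in degree $s(n-1,d)$ by an explicit inverse-system construction: starting from the Sturmfels--Xu form $F$ annihilated by all $\ell_i^2$, the general Leibniz rule together with the pigeonhole principle shows that suitable products such as $F^{d-1}$ (and more elaborate products in the even case) are annihilated by all $\ell_i^d$. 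On the other side, a purely combinatorial induction on the second differences of the coefficients of $(1+t+\cdots+t^{d-1})^m$ bounds the degree of the expected series strictly below $s(n-1,d)$ for all but finitely many $(n,d)$. The remaining cases, including $(n,d)=(7,3)$, are then settled with explicit determinantal inverse-system elements built from the rational-normal-curve parametrization. None of these ingredients appears in your outline, so as it stands the proposal is a correct reformulation of the problem but not a proof.
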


Independently, Harbourne, Schenck, and Seceleanu \cite{harbourne} gave
a more general but less precise conjecture.

\begin{con} \cite[Conjecture 1.5]{harbourne} \label{con2} Let
  $r+1\geq n \geq5$. Then the algebra
  $R_{n,r,d}=\Bbbk[x_1,x_2,\ldots,x_n]/\langle
  \ell_1^d,\ell_2^d,\ldots,\ell_{r}^d \rangle$ fails the WLP if
  $d \gg 0$.
\end{con}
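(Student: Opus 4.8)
The plan is to convert the failure of the WLP into a comparison of the Hilbert functions of $R_{n,r,d}$ and $R_{n-1,r,d}$, and then to extract those Hilbert functions from Macaulay's inverse system. Write $A=R_{n,r,d}$ and let $\ell$ be a general linear form. Restricting the $\ell_i$ to a general hyperplane shows $A/\ell A\cong R_{n-1,r,d}$, since general linear forms restrict to general linear forms in one fewer variable. The exact sequence
$$0\to (0:_A\ell)_i\to A_i\xrightarrow{\times\ell}A_{i+1}\to (R_{n-1,r,d})_{i+1}\to 0$$
identifies the cokernel of $\times\ell$ with $(R_{n-1,r,d})_{i+1}$ and forces $\dim(0:_A\ell)_i=\dim(R_{n-1,r,d})_{i+1}-\bigl(h_A(i+1)-h_A(i)\bigr)$. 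Since $\times\ell$ has maximal rank in degree $i$ exactly when one of its kernel and cokernel vanishes, the WLP of $A$ fails if and only if there is a degree $i$ with $\dim(R_{n-1,r,d})_{i+1}>\max\bigl(0,\,h_A(i+1)-h_A(i)\bigr)$; equivalently, $R_{n-1,r,d}$ is still nonzero in some degree at which $h_A$ is non-increasing.

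The second step is to compute these Hilbert functions. By the Emsalem--Iarrobino correspondence, for general linear forms $\dim(R_{m,r,d})_t$ equals the dimension of the degree-$t$ part of the ideal of $r$ general fat points of multiplicity $t-d+1$ in $\bP^{m-1}$. When these fat points impose independent conditions one gets $\dim(R_{m,r,d})_t=\max\bigl(0,\,\binom{m-1+t}{m-1}-r\binom{m-1+t-d}{m-1}\bigr)$, and a direct difference computation then reveals the striking fact that, in this regime, the first difference of $h_{R_{n,r,d}}$ agrees with $h_{R_{n-1,r,d}}$ up to the peak. In particular the last degree in which $h_{R_{n,r,d}}$ increases coincides exactly with the socle degree of $R_{n-1,r,d}$, so the criterion of the first step is never triggered and the WLP holds. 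This is consistent with the complete intersection case $r=n$, where after a change of coordinates $R_{n,n,d}$ is a monomial complete intersection and even has the SLP. Hence the failure of the WLP must be produced entirely by the deviation of the true Hilbert functions from this expected count.

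It is here that the geometry enters and where I expect essentially all the difficulty to concentrate. The deviations are governed by the failure of $r$ general fat points of large multiplicity to impose independent conditions, a phenomenon controlled by the Alexander--Hirschowitz theorem and, for the higher multiplicities occurring as $d\to\infty$, by the SGHH conjecture. The strategy is to show that for $r+1\ge n\ge5$ and $d\gg0$ the fat-point schemes attached to $R_{n-1,r,d}$ are superabundant in exactly the degree range needed, so that $R_{n-1,r,d}$ survives strictly past the last increasing degree of $R_{n,r,d}$ and the criterion fires. The main obstacle is twofold. First, because the expected peak of $R_{n,r,d}$ and the expected socle degree of $R_{n-1,r,d}$ coincide, the decisive comparison takes place entirely at the level of the superabundancy corrections, so only the exact Hilbert functions, not their asymptotics, suffice; the outcome is sensitive to the residue of $d$ and to the parity of $n$, which is why the statement is asymptotic in $d$ and why small cases and the complete intersection $r=n$ (which in fact satisfies the SLP) must be isolated as genuine exceptions. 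Second, the relevant superabundancy for unbounded multiplicity is precisely the content of open conjectures on fat points, so the argument can be made unconditional only where those Hilbert functions are known: via Alexander--Hirschowitz in a bounded range, and via the exact inverse-system computations of this paper in the almost complete intersection case $r=n+1$, which is what ultimately settles Conjecture~\ref{con1}.
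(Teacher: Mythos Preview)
The statement you are attempting to prove is not a theorem of the paper: it is Conjecture~\ref{con2}, quoted from Harbourne--Schenck--Seceleanu, and the paper does not claim to settle it in general. The paper's main result, Theorem~\ref{thm:main}, treats only the almost complete intersection case $r=n+1$, which is Conjecture~\ref{con1}. So there is no ``paper's own proof'' of Conjecture~\ref{con2} to compare against.

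Your proposal is also not a proof, and you appear to recognize this. The first step --- the exact sequence reducing the WLP for $R_{n,r,d}$ to a comparison of $\HS(R_{n,r,d},t)$ with $\HS(R_{n-1,r,d},t)$ --- is correct and standard; it is the general form of the paper's equation~(\ref{eq:WLPred}). But the second step is where the argument stalls: you correctly note that when both Hilbert functions take their Fr\"oberg values the criterion is never triggered, so the decisive comparison lives entirely in the superabundancy corrections for fat points of unbounded multiplicity, and you then state outright that this ``is precisely the content of open conjectures on fat points'' and can be made unconditional only where those Hilbert functions are already known. That is an honest diagnosis of the difficulty, but it means you have written a research outline, not a proof; the key step is deferred to the SGHH conjecture.

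For contrast, even in the case $r=n+1$ the paper does \emph{not} proceed via fat-point superabundance. Instead it determines $\deg\HS(R_{n-1,n+1,d},t)$ directly by exhibiting explicit elements of the inverse system, built as products of the degree-two forms of Sturmfels--Xu using the general Leibniz rule and the pigeonhole principle (Section~\ref{sec:degree}), and compares this to an elementary upper bound for the degree of the Fr\"oberg series obtained by analyzing inflection points of $(1+t+\cdots+t^{d-1})^{n+1}$ (Section~\ref{sec:inflection}). No geometry of fat points beyond Macaulay duality is used. Extending such explicit inverse-system constructions to $r>n+1$ would be a natural line of attack on Conjecture~\ref{con2}, but neither the paper nor your proposal carries it out.
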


Since then, the main focus have been on Conjecture
\ref{con1}. Mir\'o-Roig \cite{M} has shown the failure of the WLP when
$d=2$, Nagel and Trok \cite{NT} have shown the failure when both $n$
and $d$ are large enough, and also when $n \geq 9, d-2 \gg 0$ and
$d-2$ is divisible by $n$.  Ilardi and Vall\`es \cite{edim7} have
settled the case $(n,d) = (7,3)$, while Mir\'o-Roig and Tran \cite{MT}
have shown the failure in the cases $9 \leq n=2m+1 \leq 17, d \geq 4$,
and in the cases $d=2r, 1 \leq r \leq 8, 9 \leq n \leq 4r(r+2)-1.$

In this paper, we cover all the remaining cases, settling Conjecture
\ref{con1} and provide the following classification.
\begin{tm} \label{thm:main} Let $d,n\geq 1$. Then
  $R_{n,n+1,d}=\Bbbk[x_1,x_2,\ldots,x_n]/\langle
  \ell_1^d,\ell_2^d,\ldots,\ell_{n+1}^d \rangle$ fails the WLP except
  when $n\le 3$, $d=1$ or
  $(n,d) \in \{ (4,2), (5,2), (5,3), (7,2) \},$ and in these cases,
  the WLP holds.
\end{tm}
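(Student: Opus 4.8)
The plan is to use Macaulay's inverse system to translate the WLP question into a computation of Hilbert series. The key reduction is standard: for the algebra $R_{n,n+1,d}$, the WLP holds if and only if for a general linear form $\ell$, the multiplication map $\times\ell\colon (R_{n,n+1,d})_i\to(R_{n,n+1,d})_{i+1}$ has maximal rank in every degree $i$. Because the ring is a quotient by $n+1$ general powers in $n$ variables, its socle structure and Hilbert series are governed by the apolar (inverse system) module, which is dual to an ideal of fat points or, after the Emsalem–Iarrobino correspondence, to an ideal generated by products related to the dual points. I would first record the Hilbert function of $R_{n,n+1,d}$ itself, which by the work connecting powers of general linear forms to fat points (and the relevant known cases) is the expected/generic one; the subtlety is not the Hilbert function of the algebra but the rank of the Lefschetz map.

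First I would fix a general linear form $\ell$ and set up the exact sequence relating the kernel and cokernel of $\times\ell$ to the algebra $R_{n+1,n+1,d}$ obtained by adjoining $\ell^d$ (or equivalently by modding out the extra form). Concretely, multiplication by $\ell$ fails maximal rank in some degree precisely when there is a discrepancy between the actual and the expected dimensions, and this discrepancy can be read off from the Hilbert series of a related inverse system module. The central computation is therefore to determine, for each pair $(n,d)$, the dimension of the relevant graded piece of the inverse system and compare it against the maximal-rank prediction. I expect to phrase this via the injectivity/surjectivity of $\times\ell$ in the single critical degree (the ``peak'' degree near the middle of the Hilbert function), since for these almost complete intersections the only possible failure occurs at one transition.

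The heart of the argument, and the main obstacle, is computing the dimension of the cokernel (or kernel) of the Lefschetz map in that critical degree as an explicit function of $n$ and $d$. Via inverse systems this becomes the Hilbert series of an ideal of $n+1$ general fat points (or double points, after accounting for the $\times\ell$ map) in $\mathbb{P}^{n-1}$, and I would compute it using the Alexander–Hirschowitz theorem together with a careful analysis of the defect. The key step is to produce a closed-form expression, valid for all $n,d\ge 1$, that is strictly positive (forcing failure of maximal rank, hence failure of the WLP) except on the finite exceptional list. I would then verify directly that this expression vanishes exactly for $n\le 3$, for $d=1$, and for $(n,d)\in\{(4,2),(5,2),(5,3),(7,2)\}$, and confirm in those cases that the map has maximal rank in every degree so that the WLP genuinely holds. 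The delicate part will be handling the boundary/small cases where the generic Alexander–Hirschowitz dimension count has its exceptions, and ensuring the monotonicity or positivity of the defect function holds uniformly for large $n$ and $d$ so that no further sporadic cases of maximal rank can arise.
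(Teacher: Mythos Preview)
Your proposal has a genuine gap at its central step. You plan to compute the defect of the Lefschetz map in the critical degree via the Alexander--Hirschowitz theorem, hoping for a closed-form expression in $n$ and $d$. But Alexander--Hirschowitz classifies only linear systems of hypersurfaces with imposed \emph{double} points; via Emsalem--Iarrobino the inverse system of $\langle \ell_1^d,\dots,\ell_{n+1}^d\rangle$ in degree $j$ corresponds to fat points of multiplicity $j-d+1$, which for the relevant degrees and general $d$ are far from double. No analogue of Alexander--Hirschowitz is known for arbitrary multiplicities (this is essentially the SGHH conjecture), and computing the full Hilbert series of $R_{n-1,n+1,d}$, or even its value at the peak degree, is a wide-open problem tied to Fr\"oberg's conjecture. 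So the ``closed-form defect, then check positivity'' strategy cannot currently be carried out.

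The paper sidesteps this entirely. Using the reduction you mention (WLP for $R_{n,n+1,d}$ $\Leftrightarrow$ $\HS(R_{n-1,n+1,d},t)=[(1-t^d)^{n+1}/(1-t)^{n-1}]$), it never computes the defect at the peak. Instead it compares \emph{socle degrees}: it shows that the actual top nonzero degree of $R_{n-1,n+1,d}$ is a specific number $s(n-1,d)$, by exhibiting explicit nonzero elements of the inverse system built as products $F_1\cdots F_k$ of low-degree forms, each annihilated by suitable $\ell_i$ or $\ell_i^2$, and invoking the pigeonhole principle with the generalized Leibniz rule to force $\ell_i^d\circ(F_1\cdots F_k)=0$. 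Separately, an elementary induction on the coefficients of $(1+t+\cdots+t^{d-1})^{n+1}$ gives an \emph{upper} bound on the degree of the Fr\"oberg-predicted series. For all but finitely many $(n,d)$ the actual socle degree strictly exceeds this bound, so the Hilbert series cannot be the predicted one and WLP fails. The remaining sporadic cases are handled by constructing enough explicit inverse-system elements in the top degree and verifying a single specialization in Macaulay2. Your outline is missing both of these ideas: the product-plus-pigeonhole construction that produces inverse-system elements without any fat-point dimension count, and the reduction of the comparison to socle degrees rather than peak-degree defects.
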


The paper is organized as follows. In Section \ref{sec:prel} we
introduce some notation that will be used subsequently. In Section
\ref{sec:degree} we use the theory for inverse systems to determine
the degree of the Hilbert series for $R_{n,n+2,d}$. In Section
\ref{sec:inflection} we give an upper bound for the degree of the
Hilbert series for $R_{n,n+2,d}$ under the assumption that
$R_{n+1,n+2,d}$ has the WLP. By comparing this upper bound with the
actual degree, we can draw the conclusion that $R_{n+1,n+2,d}$ fails
the WLP in all but a finite number of cases. The remaining cases are
then dealt with separately in Section \ref{sec:sporadic}.

\section{Preliminaries} \label{sec:prel}

We begin by giving some background on Hilbert series, Fr\"oberg's
conjecture, and inverse systems.

The Hilbert series for a standard graded algebra
$A = \bigoplus_{i \geq 0} A_i$ is the power series
$\sum \dim_{\Bbbk} A_i t^i$ and is denoted by $\HS(A,t)$. The Hilbert
function of $A$ is the function $i \mapsto \dim_{\Bbbk} A_i$.

If $A$ is an artinian graded algebra and $f$ is a form in $A$ of
degree $d$ such that the map $\times f: A_i \to A_{i+d}$ has maximal
rank for all $i$, then it is an easy exercise to check that the
Hilbert series for $A/(f)$ is equal to
$\left[\HS(A,t) \cdot (1-t^d) \right]$.  Here the bracket notation
means that we truncate the series before the first non-positive term.

Fr\"oberg \cite{F} has conjectured that if $A$ is the polynomial ring
modulo an ideal generated by general forms, then the map induced by
multiplication by a general form of degree $d$ has maximal
rank. Normally, this conjecture is expressed equivalently as follows:
if $f_1,\ldots,f_r$ are general forms in $\Bbbk[x_1,x_2,\ldots,x_n]$
of degrees $d_1,d_2,\ldots,d_r$, then the Hilbert series for
$\Bbbk[x_1,x_2,\ldots,x_n]/ \langle f_1,f_2,\ldots,f_r \rangle$ equals
$\left[\prod (1-t^{d_i})/(1-t)^n \right]$.

In \cite{F} Fröberg also proves thatWe will in t
$\left[\prod (1-t^{d_i})/(1-t)^n\right]$ is a lower bound for possible
Hilbert series among forms of degrees $d_1,d_2,\ldots,d_r$ in the
lexicographic sense, so for a fixed signature
$(n,d_1,d_2,\ldots,d_r)$, the conjecture can be verified with an
example.

The conjecture is, except for a few cases, open for $r-1 > n \geq
4$. For some recent results, see \cite{N}. The case $r = n +1$ is due
to Stanley \cite{Stanley} and is of particular importance for this
paper.

Let $A$ be a monomial complete intersection; 
$A =  \Bbbk[x_1,x_2, \ldots,x_n]/(x_1^{d_1}, x_2^{d_2},\ldots,x_n^{d_n})$. 
Stanley showed that the multiplication
map $\times (x_1+x_2+\cdots+x_n)^d: A_i \to A_{i+d}$ has full rank for
every $i$ and $d$, not only settling the $n+1$--case of the Fr\"oberg
conjecture, but also opening up the area of the Lefschetz properties
for graded algebras. If we perform a linear change of coordinates,
Stanley's result is equivalent to the fact that complete intersections
generated by powers of general linear forms have the SLP.

When restricted to the equigenerated case $d=d_1= \ldots = d_{n+1}$,
this implies that
$$\HS(R_{n,n+1,d}, t) = \left[\frac{(1-t^d)^{n+1}}{(1-t)^n} \right].$$
 
Suppose now that $R_{n,n+1,d}$ satisfies the WLP. Then the map induced
by multiplication by a general linear form $\ell$ has maximal rank in
every degree, so the Hilbert series for $R_{n,n+1,d}/(\ell)$ equals
$$
\left[(1-t) \left[\frac{(1-t^d)^{n+1}}{(1-t)^n} \right] \right] =
\left[ (1-t) \frac{(1-t^d)^{n+1}}{(1-t)^n} \right] =
\left[\frac{(1-t^d)^{n+1}}{(1-t)^{n-1}} \right],
$$ 
where the first equality follows from \cite[Lemma 4]{F}.

Since $R_{n,n+1,d}/(\ell)$ is isomorphic to $R_{n-1,n+1,d}$, this
gives that $R_{n,n+1,d}$ has the WLP if and only if the Hilbert series
of $R_{n-1,n+1,d}$ is the one expected by Fr\"oberg's conjecture, that
is,
\begin{equation} \label{eq:WLPred} R_{n,n+1,d} \text{ has the WLP if
    and only if } \HS(R_{n-1,n+1,d}, t) =
  \left[\frac{(1-t^d)^{n+1}}{(1-t)^{n-1}} \right].
\end{equation}

For an ideal $I$ in the polynomial ring $\Bbbk[x_1,x_2,\dots,x_n]$ we
consider the dual polynomial ring $\Bbbk[X_1,X_2,\dots,X_n]$ where
$x_i$ acts like $\partial /\partial X_i$, for $i=1,2,\dots,n$, and the
inverse system of $I$, denoted by $I^{-1}$, is the submodule
annihilated by $I$ under this action. We use the notation $f\circ F$
for the action of the form $f\in \Bbbk[x_1,x_2,\dots,x_n]$ on the form
$F\in \Bbbk[X_1,X_2,\dots,X_n]$. By duality,
\begin{equation} \label{eq:inverse} \dim_\Bbbk [I^{-1}]_d =
  \dim_\Bbbk[\Bbbk[x_1,x_2,\dots,x_n]/I]_d,
\end{equation}
and this will enable us to use the inverse system in order to obtain
lower bounds for the Hilbert series.

Finally, the action of the $d$'th power of a general linear form
$\ell$ on a form $F$ in $\Bbbk[X_1,X_2,\dots,X_n]$ will be of
particular importance to us, and we therefore recall the general
Leibniz rule
$$\ell^d \circ F^n = \sum_{d_1+d_2+\cdots+d_n=d} \frac{d!}{d_1! d_2! \cdots d_n!} (\ell^{d_1} \circ F) (\ell^{d_2} \circ F) \cdots (\ell^{d_n} \circ F).$$

\section{The degree of the Hilbert series for $R_{n,n+2,d}$
}\label{sec:degree}

By definition, the degree of the Hilbert series for an artinian graded
algebra $A$ is equal to max $\{j \,|\, A_j \neq 0\}$. Since $A$ is
artinian and non-zero, this number also agrees with the
Castelnuovo-Mumford regularity of $A$, see \cite{E}.

Let $$s(n,d) = \begin{cases}
  \frac{(n+1)(d-1)}{2} & \text{if $n$ is odd,}\\
  \left \lfloor \frac{n(n+2)(d-1)}{2(n+1)} \right \rfloor & \text{if $n$ is even.}\\
\end{cases}
$$

We will show that $\deg (\HS(R_{n,n+2,d},t)) = s(n,d)$ for all
$n,d\geq 1$.

Sturmfels and Xu \cite{squares} have shown that
$\deg(\HS(R_{n,n+2,2,},t)) = s(n,2)$, and that the dimension of
$R_{n,n+2,2}$ in degree $s(n,2)$ is equal to $2^{\frac{n}{2}}$ if $n$
is even, and equal to $1$ if $n$ is odd.

Nagel and Trok \cite{NT} proved that
$\deg(\HS(R_{n,n+2,d},t)) \leq s(n,d)$. They also proved equality in
the case $n$ odd, in which the dimension of $R_{n,n+2,d}$ in degree
$s(n,d)$ is equal to $1$, and in the case $n$ even and $n+1$ divides
$d-1$ or $d\geq n^2+n+2$, in which the dimension of $R_{n,n+2,d}$ in
degree $s(n,d)$ is equal to a binomial coefficient.

By (\ref{eq:inverse}), we have
$$\dim_\Bbbk [ \langle \ell_1^d,\ell_2^d,\ldots,\ell_{n+2}^d \rangle^{-1}]_s  \neq 0 \Longrightarrow  \deg (\HS(R_{n,n+2,d},t)) \geq s,$$
so in order to show that $s(n,d)$ is a lower bound, it is enough to
show that the inverse system is non-zero in degree $s(n,d)$.

Although it is sufficient to show that $s(n,d)$ is a lower bound for
$\deg(\HS(R_{n,n+2,d},t))$ in the unproven part of the case $n$ even,
we show, for completeness, that $s(n,d)$ is a lower bound for all $n$.

We begin with an alternative proof of the case $n$ odd. The argument
is short and also gives the main idea behind the more involved proof
for the even case.

\begin{prop} \label{prop:nodd} Let $n\geq1$ be odd and let $d \geq
  1$. Then the value of the Hilbert function of $R_{n,n+2,d}$ is
  non-zero in degree $s(n,d)$.
\end{prop}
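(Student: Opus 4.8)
The plan is to use the duality \eqref{eq:inverse} and simply exhibit a single nonzero form in the inverse system $\langle \ell_1^d,\ldots,\ell_{n+2}^d\rangle^{-1}$ in degree $s(n,d)$; by the implication recalled just before the proposition, this is exactly what is needed for the lower bound on the Hilbert function. Write $n=2k-1$, so that $s(n,d)=k(d-1)$ and there are $n+2=2k+1$ forms. The idea is to reduce the general case to the case $d=2$ by raising one form to a power, the point being the degree identity $s(n,d)=(d-1)\,s(n,2)$, which holds precisely because $n$ is odd (here $s(n,2)=\tfrac{n+1}{2}=k$).

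For the base case $d=2$ I would invoke the result of Sturmfels and Xu \cite{squares} recalled above: for general linear forms the value of the Hilbert function of $R_{n,n+2,2}$ in degree $s(n,2)=k$ equals $1$. By \eqref{eq:inverse} this furnishes a nonzero form $G\in\Bbbk[X_1,\ldots,X_n]$ of degree $k$ with $\ell_i^2\circ G=0$ for all $i=1,\ldots,n+2$. Since both this statement and the conclusion we are after concern \emph{general} linear forms, one may fix the $\ell_i$ general enough that Sturmfels--Xu applies and that the Hilbert function of $R_{n,n+2,d}$ attains its generic value.

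Now set $F=G^{d-1}$. Then $\deg F=(d-1)k=s(n,d)$, and $F\neq 0$ because $\Bbbk[X_1,\ldots,X_n]$ is a domain. To check that $F$ lies in the inverse system I would apply the general Leibniz rule to each $\ell_i$: in the expansion
$$\ell_i^d\circ G^{d-1}=\sum_{d_1+\cdots+d_{d-1}=d}\frac{d!}{d_1!\cdots d_{d-1}!}\,\prod_{j=1}^{d-1}\bigl(\ell_i^{d_j}\circ G\bigr),$$
every term distributes $d$ copies of $\ell_i$ among the $d-1$ factors $G$, so by the pigeonhole principle some index satisfies $d_j\ge 2$. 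For such an index $\ell_i^{d_j}\circ G=\ell_i^{d_j-2}\circ(\ell_i^2\circ G)=0$, so every term vanishes and $\ell_i^d\circ F=0$. Hence $F$ is a nonzero element of $\langle\ell_1^d,\ldots,\ell_{n+2}^d\rangle^{-1}$ in degree $s(n,d)$, and \eqref{eq:inverse} completes the argument. The case $d=1$ is trivial, with $F=1$.

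The genuine content sits entirely in the base case $d=2$: a naive parameter count shows that the $2k+1$ conditions $\ell_i^2\circ G=0$ are expected to leave no nonzero $G$ once $k\ge 3$, so the existence of $G$ rests on an unexpected dependency among these conditions, which is exactly what Sturmfels and Xu supply. The power step, by contrast, is purely formal. I also note that the clean reduction hinges on $s(n,d)=(d-1)\,s(n,2)$, an equality special to odd $n$; for even $n$ the floor in the definition of $s(n,d)$ destroys it, which is presumably why the even case demands the more elaborate argument announced in the text.
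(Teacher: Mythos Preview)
Your argument is correct and is essentially identical to the paper's own proof: both invoke Sturmfels--Xu for the base case $d=2$, then take the $(d-1)$st power and use the general Leibniz rule together with pigeonhole to handle $d\ge 2$, with $d=1$ treated trivially. Your extra remarks on the parameter count and on why the identity $s(n,d)=(d-1)\,s(n,2)$ fails for even $n$ are accurate commentary but not part of the paper's proof.
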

\begin{proof}
  When $d = 1$, we have $s(n,1) = 0$, and the value of the Hilbert
  function in degree $0$ is equal to $1$. The case $d=2$ follows from
  the result by Sturmfels and Xu, in particular, there is a form $F$
  of degree $s(n,d)$ such that $\ell_i^2 \circ F = 0$ for
  $i = 1, \ldots, n+2$. For the case $d > 2$, it follows from the
  pigeonhole principle in conjunction with the general Leibniz rule
  that $\ell_i^d \circ F^{d-1} = 0$ for $i = 1, \ldots, n+2$. Finally,
  the degree of $F^{d-1}$ equals
  $$(d-1) s(n,2) = (d-1) \frac{n-1}{2} = s(n,d).$$
\end{proof}

We now turn to the even case. Also here we are able to reduce the
argument to the result by Sturmfels and Xu in degree $2$.

\begin{lm} \label{lemma:dleq3} Let $n$ be even and let
  $1 \leq d \leq 3$. Then the value of the Hilbert function of
  $R_{n,n+2,d}$ is non-zero in degree $s(n,d)$.

\end{lm}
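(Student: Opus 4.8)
The plan is to use the inverse system duality (\ref{eq:inverse}): it suffices to exhibit, for each $d\in\{1,2,3\}$, a nonzero form $F\in\Bbbk[X_1,\ldots,X_n]$ of degree $s(n,d)$ with $\ell_i^d\circ F=0$ for all $i=1,\ldots,n+2$, since then $[\langle \ell_1^d,\ldots,\ell_{n+2}^d\rangle^{-1}]_{s(n,d)}\neq 0$ and hence the Hilbert function of $R_{n,n+2,d}$ is non-zero in degree $s(n,d)$. The cases $d=1$ and $d=2$ are immediate: for $d=1$ one has $s(n,1)=0$ and the constant $1$ works, while for $d=2$ the result of Sturmfels and Xu supplies such an $F$ of degree $s(n,2)$ (indeed the relevant dimension is $2^{n/2}$). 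The substance of the lemma is therefore the case $d=3$, which I would treat as the squaring analogue of the argument in Proposition \ref{prop:nodd}.

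First I would record the arithmetic of $s$ in the even case. Writing $n(n+2)=(n+1)^2-1$ gives
\[
s(n,2)=\left\lfloor \frac{n+1}{2}-\frac{1}{2(n+1)}\right\rfloor=\frac{n}{2},
\qquad
s(n,3)=\left\lfloor (n+1)-\frac{1}{n+1}\right\rfloor=n,
\]
so that $2\,s(n,2)=n=s(n,3)$. Consequently, if $F$ is the degree-$s(n,2)$ form furnished by the $d=2$ case, then $F^2$ has degree exactly $s(n,3)$, and $F^2\neq 0$ because $\Bbbk[X_1,\ldots,X_n]$ is a domain.

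It then remains to verify $\ell_i^3\circ F^2=0$ for each $i$. Since $\ell_i^2\circ F=0$, we have $\ell_i^a\circ F=\ell_i^{a-2}\circ(\ell_i^2\circ F)=0$ for every $a\geq 2$. Applying the general Leibniz rule to $F^2$,
\[
\ell_i^3\circ F^2=\sum_{a+b=3}\frac{3!}{a!\,b!}\,(\ell_i^a\circ F)(\ell_i^b\circ F),
\]
each term distributes three derivatives among two factors, so by the pigeonhole principle at least one of $a,b$ is $\geq 2$ and the corresponding factor vanishes. Hence $\ell_i^3\circ F^2=0$ for all $i$, and $F^2$ is the desired witness.

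The argument has no serious obstacle once the degree bookkeeping is in place; the one point to watch is the interaction of the floor function with the squaring. This is precisely why the clean identity $2\,s(n,2)=s(n,3)$ is available only through $d=3$: for $d\geq 4$ a single power of the $d=2$ witness no longer lands in degree $s(n,d)$, so the remaining even cases require a separate treatment.
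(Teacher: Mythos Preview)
Your proof is correct and follows essentially the same approach as the paper: the cases $d=1,2$ are disposed of trivially and via Sturmfels--Xu, and for $d=3$ you square the degree-$s(n,2)$ witness, check the arithmetic $2\,s(n,2)=s(n,3)$, and kill $\ell_i^3\circ F^2$ by Leibniz plus pigeonhole. Your explicit note that $F^2\neq 0$ (since the dual polynomial ring is a domain) and your slightly cleaner computation of $s(n,3)$ via $n(n+2)=(n+1)^2-1$ are welcome clarifications over the paper's version.
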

\begin{proof}
  The cases $d=1$ and $d= 2$ are dealt with similarly as in
  Proposition \ref{prop:nodd}.

  We now consider the case $d = 3$. Let $F$ be a form such that
  $\ell_i^2 \circ F = 0$ for all $i$. Since $F$ is in the inverse
  system of the ideal generated by $n+2$ squares of general linear
  forms, we can choose $F$ of degree $s(n,2)$.

  By the pigeonhole principle and the general Leibniz rule, we have
  $\ell_i^3 \circ F^2 = 0$. The degree of $F^2$ is $2s(n,2)$. Since
$$s(n,2)= \left \lfloor \frac{n(n+2)}{2(n+1)}  \right \rfloor = 
\left \lfloor \frac{n(n+1)}{2(n+1)} + \frac{n}{2(n+1)} \right \rfloor
= \left \lfloor \frac{n}{2} + \frac{n}{2(n+1)} \right \rfloor =
\frac{n}{2}
$$
and
$$s(n,3) 
= \left \lfloor \frac{n(n+2)}{n+1} \right \rfloor = \left \lfloor n +
  \frac{n+1}{n+2} \right \rfloor = n,
$$
we get that the degree of $F^2$ equals $s(n,3)$, which shows that the
value of the Hilbert function is non-zero in degree $s(n,3)$.

\end{proof}

\begin{lm} \label{lemma:dg3} Let $n$ be even and let
  $4 \leq d \leq n+1$. Then the value of the Hilbert function of
  $R_{n,n+2,d}$ is non-zero in degree $s(n,d)$.
\end{lm}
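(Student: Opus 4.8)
The plan is to use the inverse system. By (\ref{eq:inverse}) it suffices to produce a nonzero form $F\in\Bbbk[X_1,\dots,X_n]_{s(n,d)}$ with $\ell_i^d\circ F=0$ for $i=1,\dots,n+2$. As in Proposition~\ref{prop:nodd} and Lemma~\ref{lemma:dleq3}, the basic tool is the general Leibniz rule together with the pigeonhole principle: if $G_1,\dots,G_t$ are forms with $\ell_i^{a_j+1}\circ G_j=0$ for all $i$, then every term in $\ell_i^d\circ(G_1\cdots G_t)$ in which some factor $G_j$ receives more than $a_j$ copies of $\ell_i$ vanishes, so the product is annihilated by $\ell_i^d$ as soon as $a_1+\cdots+a_t<d$. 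I combine this with the Sturmfels--Xu input \cite{squares}: the space $W$ of forms of degree $s(n,2)=n/2$ killed by all $\ell_i^2$ is nonzero, indeed of dimension $2^{n/2}$.

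First I would induct on $d$ and dispose of the odd values. For odd $d$ (so $d\ge 5$ in our range) take the element $\Phi_{d-1}$ of degree $s(n,d-1)$ furnished by the even case $d-1\ge 4$ treated below; it satisfies $\ell_i^{d-1}\circ\Phi_{d-1}=0$, i.e.\ has ``capacity'' $d-2$ in the sense above. Multiplying by any nonzero $F\in W$ (capacity $1$) gives, by Leibniz and pigeonhole with $(d-2)+1=d-1<d$, a form $F\,\Phi_{d-1}$ annihilated by every $\ell_i^d$. Its degree is $s(n,d-1)+n/2$, and a routine floor computation of the kind carried out in Lemma~\ref{lemma:dleq3} shows that this equals $s(n,d)$ exactly when $d$ is odd (the floor defining $s(n,d)$ increases by $n/2$ at odd steps). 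Thus the odd values reduce to the even ones, and the nonvanishing of $F\,\Phi_{d-1}$ is immediate.

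The even values of $d$ are the crux, and I expect this to be the main obstacle. Here the same bookkeeping gives $s(n,d-1)+n/2=s(n,d)-1$, since the floor defining $s(n,d)$ jumps by $n/2+1$ rather than $n/2$ at even steps, so the product $F\,\Phi_{d-1}$ falls one degree short. More fundamentally, a product of $W$-forms annihilated by every $\ell_i^d$ can involve at most $d-1$ factors and hence has degree at most $(d-1)n/2<s(n,d)$: no product of Sturmfels--Xu forms reaches degree $s(n,d)$, the deficit reflecting a super-additivity of the floor that products cannot overcome. One must therefore exhibit a form of degree $s(n,d)$ on which each $\ell_i^d$ vanishes through genuine cancellation of the surviving Leibniz terms, not termwise. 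Through the bridge of Emsalem and Iarrobino \cite{emsalem} this amounts to producing a hypersurface of degree $s(n,d)$ with multiplicity at least $s(n,d)-d+1$ at each of the $n+2$ general points dual to the $\ell_i$ --- a \emph{defective} family of fat points, of nonpositive expected dimension. The plan is to build such a form directly, exploiting the full $2^{n/2}$-dimensional freedom in $W$ (for instance a suitable product or determinantal combination of distinct members of $W$) and the hypothesis $d\le n+1$ to force the required cancellation; establishing this cancellation, together with the nonvanishing of the resulting form, is where the real work lies.
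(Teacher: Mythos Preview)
Your proposal has a genuine gap: the even case, which you correctly identify as the crux, is left undone, and your diagnosis that it requires ``genuine cancellation of the surviving Leibniz terms'' is mistaken. The paper's proof is purely termwise via pigeonhole, just as in Proposition~\ref{prop:nodd} and Lemma~\ref{lemma:dleq3}; the trick is a sharper choice of factors. Inside the Sturmfels--Xu space $W$ there is, for each $i$, a (unique up to scalar) form $F_i$ of degree $s(n-1,2)=n/2$ with the \emph{extra} vanishing $\ell_i\circ F_i=0$ (it is the top-degree inverse-system element for one linear form together with $n+1$ squares, living in $n-1$ variables, and $n-1$ is odd). One then takes a further form $G$ of degree $s(d-3,2)$ with $\ell_i\circ G=0$ for all $i\ge d$ and $\ell_j^2\circ G=0$ for all $j$ (the top-degree inverse-system element for $n+3-d$ linear forms and $d-1$ squares; here the hypothesis $4\le d\le n+1$ is used). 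For the product $H=G\,F_1\cdots F_{d-1}$, each $\ell_i$ kills outright at least one factor (namely $F_i$ if $i\le d-1$, or $G$ if $i\ge d$) and $\ell_i^2$ kills the remaining $d-1$ factors, so the pigeonhole bound is $0+(d-1)\cdot 1=d-1<d$ and $\ell_i^d\circ H=0$ term by term---no cancellation. A direct computation then gives $\deg H=s(d-3,2)+(d-1)\,s(n-1,2)=s(n,d)$, with the two parities of $d$ handled separately.

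Your intuition to exploit the $2^{n/2}$-dimensional freedom in $W$ was therefore on target, but the missing idea is to select the distinguished members $F_i\in W$ that are annihilated by a \emph{first} power of one chosen $\ell_i$, and to supply the residual degree via the auxiliary factor $G$ rather than via an extra $W$-form. Note also that your reduction of odd $d$ to even $d-1$ is unnecessary once this construction is in hand (and is in any case circular until the even base case is established).
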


\begin{proof}
  Let $F_i$ be such that $\ell_i \circ F_i = 0$ and
  $\ell_j^2 \circ F_i = 0$ for all $j$.  Since $F_i$ is in the inverse
  system of the ideal generated by $1$ general linear form and $n+1$
  squares of general linear forms, we can choose $F_i$ to be of degree
  $s(n-1,2)$.

  Next, let $G$ be such that $\ell_i \circ G = 0$ for $i \geq d$, and
  $\ell_i^2 \circ G = 0$ for all $i$. Now $G$ is in the inverse system
  of the ideal generated by $n+2-d+1$ general linear forms and $d-1$
  squares of general linear forms, so we can choose $G$ of degree
  $s(n-(n+2-d+1),2) = s(d-3,2)$.

  It follows by the pigeonhole principle and the general Leibniz rule
  that $\ell_i^{d} \circ G F_1 \cdots F_{d-1} = 0$ for
  $i=1,\ldots,n+2$, and we are done if we can show that the degree of
  the form $G F_1 \cdots F_{d-1}$ is equal to $s(n,d)$, that is, that
  $s(d-3,2) + (d-1) s(n-1,2) = s(n,d)$.

  Suppose first that $d$ is odd and write $d-1 = 2c$.  We get
  \begin{align*}
    s(d-3,2)  &= \left \lfloor \frac{(d-1)(d-3)}{2(d-2)} \right \rfloor = 
                \left \lfloor \frac{c (2c-2)}{2c-1} \right \rfloor = 
                \left \lfloor c-\frac{c}{2c-1} \right \rfloor = c - 1,\\
    (d-1) \cdot s(n-1,2) &= c n,\\
    s(n,d) &= \left \lfloor \frac{ (n+2) n c}{n+1} \right \rfloor = 
             \left \lfloor n c + \frac{nc}{n+1} \right \rfloor = 
             nc + \left \lfloor c - \frac{c}{n+1} \right \rfloor = nc + c - 1,\\
  \end{align*}

  where we in the last step have used that $c < n+1$. This proves the
  case $d$ odd.

  Suppose now that $d$ is even. We get
  \begin{align*}
    s(d-3,2) &= \frac{d-2}{2} \\
    (d-1) \cdot s(n-1,2) &= \frac{n(d-1)}{2}
  \end{align*}
  and
  \begin{align*}
    s(n,d) &= \left \lfloor \frac{(n+2)n(d-1)}{2 (n+1)} \right \rfloor = 
             \left \lfloor \frac{n(d-1)}{2} + \frac{n(d-1)}{2(n+1} \right \rfloor =
             \frac{n(d-1)}{2} + \left \lfloor \frac{n(d-1)}{2(n+1)} \right \rfloor \\
           &= \frac{n(d-1)}{2} + \left \lfloor \frac{d-1}{2} - \frac{d-1}{2(n+1)} \right \rfloor \\
           &= \frac{n(d-1)}{2} + \left \lfloor \frac{d}{2} - \frac{1}{2} - \frac{d-1}{2(n+1} \right \rfloor =  
             \frac{n(d-1)}{2} + \frac{d}{2} -1,\\
  \end{align*}
  where we in the last step have used that $d-1 < n + 1$. This
  finishes the proof.
\end{proof}

\begin{tm} \label{thm:degree} The degree of the Hilbert series for
  $R_{n,n+2,d}$ equals $s(n,d)$.
\end{tm}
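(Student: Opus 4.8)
The plan is to combine the upper bound $\deg(\HS(R_{n,n+2,d},t)) \le s(n,d)$ of Nagel and Trok \cite{NT} with a matching lower bound; by (\ref{eq:inverse}) it suffices to exhibit, for every $n$ and $d$, a nonzero form of degree $s(n,d)$ in the inverse system of $\langle \ell_1^d,\ldots,\ell_{n+2}^d\rangle$. For $n$ odd this is exactly Proposition \ref{prop:nodd}, so the whole content lies in the even case.

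For $n$ even the idea is to promote the forms produced in Lemmas \ref{lemma:dleq3} and \ref{lemma:dg3} into a single inductive family. First I would record that the construction of Lemma \ref{lemma:dg3} applies verbatim one step beyond its stated range: taking $G$ annihilated by $\ell_{n+2}$ and by every $\ell_i^2$, and $F_1,\ldots,F_{n+1}$ as there, the product $G F_1\cdots F_{n+1}$ is annihilated by every $\ell_i^{n+2}$ and has degree $(n+2)\,s(n-1,2)=n(n+2)/2=s(n,n+2)$. Writing $B_e$ for a nonzero form of degree $s(n,e)$ killed by all $\ell_i^e$, the lemmas thus supply $B_1,\ldots,B_{n+1}$ together with the distinguished block $B_{n+2}$.

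The engine is the identity $s(n,d+(n+1))=s(n,d)+\tfrac{n(n+2)}{2}$, valid for all $d$ because $\tfrac{n(n+2)(n+1)}{2(n+1)}=\tfrac{n(n+2)}{2}$ is an integer (here $n$ even is essential), so adding the period $n+1$ to the argument shifts $s$ by an exact integer with no rounding. Given $d$, write $d=d_0+q(n+1)$ with $1\le d_0\le n+1$ and form $B_d:=B_{d_0}B_{n+2}^{\,q}$. By the general Leibniz rule, any nonzero term of $\ell_i^d\circ B_d$ would place at most $d_0-1$ of the $d$ copies of $\ell_i$ on $B_{d_0}$ and at most $n+1$ on each of the $q$ factors $B_{n+2}$, for a total of at most $(d_0-1)+q(n+1)=d-1$; since this is less than $d$, the pigeonhole principle forces $\ell_i^d\circ B_d=0$ for every $i$. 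As a product of nonzero forms in $\Bbbk[X_1,\ldots,X_n]$ the form $B_d$ is nonzero, and iterating the identity gives $\deg B_d=s(n,d_0)+q\,\tfrac{n(n+2)}{2}=s(n,d)$, the desired witness.

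The step I expect to carry the weight is pinning down the right period. Multiplying by small blocks fails precisely because $\lfloor x\rfloor+\lfloor y\rfloor$ can drop below $\lfloor x+y\rfloor$, and one checks that $n+1$ is the least $m$ for which $\tfrac{n(n+2)m}{2(n+1)}$ is an integer, so $B_{n+2}$ is the unique floor-free building block; this is also why the single case $d=n+2$, just past Lemma \ref{lemma:dg3}, must be established separately. Everything else — the Leibniz/pigeonhole annihilation and the non-vanishing of the product — is routine, so once the period identity and the block $B_{n+2}$ are in place the induction closes and, together with the bound of \cite{NT}, yields $\deg(\HS(R_{n,n+2,d},t))=s(n,d)$ for all $n,d\ge 1$.
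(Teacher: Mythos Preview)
Your proposal is correct and follows essentially the same route as the paper: write $d=d_0+q(n+1)$ with $1\le d_0\le n+1$, use Lemmas~\ref{lemma:dleq3}--\ref{lemma:dg3} to obtain the base forms $B_{d_0}$, build the period block $B_{n+2}=F_1\cdots F_{n+2}$ (your $GF_1\cdots F_{n+1}$ is exactly the paper's product $F$, with $G=F_{n+2}$), and combine via Leibniz/pigeonhole together with the integrality of $\tfrac{n(n+2)}{2}$ to get a nonzero element of the inverse system in degree $s(n,d)$. The only difference is packaging: you isolate the $d=n+2$ case and the shift identity $s(n,d+(n+1))=s(n,d)+\tfrac{n(n+2)}{2}$ explicitly, whereas the paper verifies the degree equality $s(n,c)+(n+2)a\cdot s(n-1,2)=s(n,d)$ in one line.
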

\begin{proof}
  The case $n$ odd was established by Nagel and Trok, so we only need
  to consider the case $n$ even.  Moreover, by \cite[Theorem 4.4]{NT},
  the degree of the Hilbert series of $R_{n,n+2,d}$ is less than or
  equal to $s(n,d)$, so it is sufficient to prove that the
  $R_{n,n+2,d}$ is non-zero in degree $s(n,d)$.

  Write $d = c + a(n+1)$, where $1 \leq c \leq n+1$. By Lemma
  \ref{lemma:dleq3} and Lemma \ref{lemma:dg3}, there is a form $F_c$
  of degree $s(n,c)$ such that $\ell_i^{c} \circ F_c = 0$ for
  $i = 1, \ldots, n+2$. Let $F = F_1 \cdots F_{n+2}$, where $F_i$ is
  such that $\ell_i \circ F_i = 0$ and $\ell_j^2 \circ F_i = 0$ for
  all $j$.  Then, by the pigeonhole principle and the general Leibniz
  rule, we have that $\ell^{(n+1) +1} \circ F = 0$, or more
  generalized, that $\ell^{a(n+1) +1} \circ F^a = 0$.

  It follows that $\ell_i^{c+a(n+1)} \circ F_c F^a = 0$. Thus we are
  done if we can show that the degree of $F_c F^a$ is equal to
  $s(n,d)$, that is, that $s(n,c) + (n+2) a \cdot s(n-1,2) = s(n,d)$,
  which we verify by the calculation

\begin{align*}
  s(n,d) &= \left \lfloor \frac{(n+2)n(c+a(n+1) - 1)}{2 (n+1)} \right \rfloor = 
           \left \lfloor \frac{(n+2)n(a(n+1))}{2 (n+1)}  +  
           \frac{(n+2)n(c- 1)}{2 (n+1)} \right \rfloor \\&=
  \frac{(n+2)n a }{2}  +  \left \lfloor \frac{(n+2)n(c- 1)}{2 (n+1)} \right \rfloor = 
  (n+2) a \cdot s(n-1,2) + s(n,c).
\end{align*}
\end{proof}

\section{An upper bound for the smallest inflection point of the
  Hilbert function of a complete intersection}\label{sec:inflection}

In order to use the results from the previous section to draw
conclusions about the WLP, we need an upper bound for the degree of
the expected Hilbert series
\begin{equation}\label{eq:SecondDiff}
  \left[\frac{(1-t^d)^{n+2}}{(1-t)^n}\right]
\end{equation}
for $R_{n,n+2,d}$ given by Fr\"oberg's conjecture. Since
$$\frac{(1-t^d)^{n+2}}{(1-t)^{n}} 
= (1-t)^2(1+t+\cdots+t^{d-1})^{n+2} ,$$ we are interested in the
lowest degree where the coefficients of the polynomial
$(1-t)^2(1+t+\cdots+t^{d-1})^{n+2}$ are non-positive.  We will provide
the necessary bounds by induction on $n$. The induction step is
Lemma~\ref{lemma:1} below and the base of the induction is given in
Lemma~\ref{lemma:2}.

For the statements of these lemmas, we introduce the following
notation. For a sequence $a_0,a_1,\ldots,a_n$ of integers, let
$\Delta(a)$ be the sequence of differences
$\Delta(a) = a_0,a_1-a_0,\dots, a_n-a_{n-1}, -a_n$. For simplicity we
will assume that all sequences are zero outside the range of indices
for which they are defined.  The generating series of this sequence is
$\sum_{i=0}^{n+1} \Delta (a)_it^i = (1-t)\sum_{i=0}^n a_it^i$. Instead
of looking at the range of indices for which the coefficients of the
polynomial in (\ref{eq:SecondDiff}) are non-positive, we will look at
where the first difference of the coefficients of
$(1+t+\cdots+t^{d-1})^{n+2}$ are decreasing.

\begin{lm} \label{lemma:1} Let $n \geq 4$, let $d \ge 1$, let
  \[ \sum_{i=0}^{n(d-1)} a_i t^i = (1+t+\cdots+t^{d-1})^n,\quad \text{
      and let }\quad \sum_{i=0}^{(n+1)(d-1)} b_i t^i =
    (1+t+\cdots+t^{d-1})^{n+1}.\] Suppose that for some
  $s\in \frac12\mathbb N$ with $n(d-1)/2-s\ge (d-1)/2$ we have that
  \begin{equation} \label{eq:one} \Delta(a)_{i} \geq \Delta(a)_{i+1},
    \qquad s \le i \le (d-1)n-s
  \end{equation}
  and
  \begin{equation}
    \label{eq:two}
    \Delta(a)_{s-j} \geq \Delta(a)_{s+j+1}, \qquad 0 \leq j \leq 
    s.
  \end{equation} 
  Then
  \begin{equation} \label{eq:three} \Delta(b)_{i} \geq \Delta(b)_{i+1}
    , \qquad s+\frac{d-1}{2} \le i \le
    (d-1)(n+1)-\left(s+\frac{d-1}{2}\right)
  \end{equation}
  and
  \begin{equation}
    \label{eq:four} \Delta(b)_{s+(d-1)/2-j} \geq \Delta(b)_{s+(d-1)/2+j+1} ,\qquad 0 \leq j \leq s+\frac{d-1}2.
  \end{equation}
  In all cases, the index $j\in \frac 12 \mathbb N$ takes only values
  that make the indices integers.
\end{lm}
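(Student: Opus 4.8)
The plan is to exploit the factorization $b(t) = a(t)\,(1+t+\cdots+t^{d-1})$, which passes to the difference sequences. Since the generating series of $\Delta(a)$ is $(1-t)\sum_i a_i t^i$, we get $\sum_i \Delta(b)_i t^i = \big(\sum_i \Delta(a)_i t^i\big)(1+t+\cdots+t^{d-1})$, that is
\[
\Delta(b)_i = \sum_{k=0}^{d-1} \Delta(a)_{i-k}.
\]
Two consequences drive the whole argument. First, telescoping gives
\[
\Delta(b)_i - \Delta(b)_{i+1} = \Delta(a)_{i-d+1} - \Delta(a)_{i+1},
\]
so the monotonicity statement \eqref{eq:three} is \emph{equivalent} to $\Delta(a)_{i-d+1} \geq \Delta(a)_{i+1}$, a comparison of two entries of $\Delta(a)$ whose indices differ by exactly $d$. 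Second, since $a$ is palindromic, $a_i = a_{n(d-1)-i}$, the sequence $\Delta(a)$ is antisymmetric about $\tfrac12(n(d-1)+1)$; applying this antisymmetry to \eqref{eq:two} produces the reflected ``upper-end'' version $\Delta(a)_{T-j} \geq \Delta(a)_{T+j+1}$ with $T = n(d-1)-s$, which I will use to treat the top boundary symmetrically to the bottom one.

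For \eqref{eq:three} I would argue the equivalent inequality $\Delta(a)_{p} \geq \Delta(a)_{p+d}$ (writing $p=i-d+1$) by cases on where the pair $\{p,p+d\}$ sits. When $s \leq p$ and $p+d \leq T+1$ the whole index window lies in the range of \eqref{eq:one}, so chaining the unit-step inequalities of \eqref{eq:one} from $p$ up to $p+d$ gives the claim. When $p<s$, which the range of $i$ allows down to $p=s-\tfrac{d-1}{2}$, I reflect the left index across $s+\tfrac12$: \eqref{eq:two} gives $\Delta(a)_p \geq \Delta(a)_{2s-p+1}$, and the range constraint $p \geq s-\tfrac{d-1}{2}$ is exactly what forces $2s-p+1 \leq p+d$, so \eqref{eq:one} then yields $\Delta(a)_{2s-p+1} \geq \Delta(a)_{p+d}$ and the two chain together. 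The upper boundary $p+d>T$ is handled identically using the reflected inequality about $T+\tfrac12$.

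For \eqref{eq:four} I would expand both sides via the convolution formula and pair the $k$-th term on the left with the $(d-1-k)$-th term on the right. Each such pair has the form $\Delta(a)_{s-m}-\Delta(a)_{s+m+1}$ with $m = j+k-\tfrac{d-1}{2}$, so the whole difference $\Delta(b)_{s'-j}-\Delta(b)_{s'+j+1}$ (with $s' = s+\tfrac{d-1}{2}$) becomes a sum of symmetric pairs about $s+\tfrac12$ indexed by $m$ running in unit steps over $\{j-\tfrac{d-1}{2},\dots,j+\tfrac{d-1}{2}\}$. By \eqref{eq:two} every pair with $0\le m\le s$ is nonnegative, while each pair with $m<0$ is exactly the negative of the pair with index $-1-m\ge 0$; since these reflected indices form a subset of the nonnegative ones, the negative pairs cancel against matching positive pairs, leaving only nonnegative pairs and proving \eqref{eq:four}.

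The main obstacle I anticipate is the boundary bookkeeping rather than any single inequality: I must check that the reflections land inside the range where \eqref{eq:one}, \eqref{eq:two}, or their antisymmetric counterparts are available, and handle terms whose index leaves the support of $\Delta(a)$, where the relevant entry is $0$, by falling back on antisymmetry. This is exactly where the hypothesis $n(d-1)/2 - s \geq (d-1)/2$ enters: it guarantees $T-s \geq d-1$, so the ``middle'' interval $[s,T]$ is wide enough that a window of width $d$ straddles at most one boundary and the lower and upper reflections never overlap inconsistently. A secondary point requiring care is the parity bookkeeping flagged in the statement, namely verifying that every index $s\pm j$, $s'\pm j$, and $m$ appearing above is an integer under the stated constraints.
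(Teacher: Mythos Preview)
Your proposal is correct and follows essentially the same route as the paper's proof. Both hinge on the identity $\Delta(b)_i-\Delta(b)_{i+1}=\Delta(a)_{i-d+1}-\Delta(a)_{i+1}$ (the paper writes it via $\Delta(b)_i=a_i-a_{i-d}$, you via the convolution $\Delta(b)_i=\sum_{k=0}^{d-1}\Delta(a)_{i-k}$), then prove \eqref{eq:three} by chaining \eqref{eq:one} in the middle and reflecting with \eqref{eq:two} near the boundary, and prove \eqref{eq:four} by expanding both sides as sums of $\Delta(a)$-terms and comparing termwise via \eqref{eq:two}. The only cosmetic differences are: for \eqref{eq:three} the paper invokes the antisymmetry of $\Delta(b)$ to reduce to the lower half, whereas you handle the upper boundary by the reflected version of \eqref{eq:two}; and for \eqref{eq:four} the paper splits into the cases $j\ge (d-1)/2$ and $j\le (d-1)/2$ (rearranging the inequality in the second case so that only $2j+1$ terms remain on each side), whereas your single pairing together with the cancellation $P_m=-P_{-1-m}$ reproduces exactly that rearrangement. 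Your flagged ``boundary bookkeeping'' (indices leaving the support of $\Delta(a)$) is handled at the same level of detail as in the paper, which likewise appeals to \eqref{eq:two} ``termwise'' and to antisymmetry without spelling out the out-of-range cases.
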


\begin{proof}
  For simplicity, we will denote $(d-1)/2$ by $m$ which is an integer
  when $d$ is odd and a half integer for even $d$.  Observe that the
  sequences $\Delta({a})_i$ and $\Delta({b})_i$ are anti-symmetric around
  $mn+1/2$ and $m(n+1)+1/2$ respectively and that they are positive in
  the first half and negative in the second half. In particular, this gives that it is
  sufficient to prove (\ref{eq:three}) for $i<m(n+1).$

  We have that
  $(1-t)(1+t+\cdots+ t^{d-1})^{n+1} = (1-t^d)(1+t+\cdots+
  t^{d-1})^{n}$ which shows that $\Delta(b)_i = a_i-a_{i-d}$ and
  \begin{equation}\label{eq:compare}
    \Delta(b)_i-\Delta(b)_{i+1} = a_i-a_{i-d} - a_{i+1} + a_{i+1-d} =
    \Delta(a)_{i+1-d}-\Delta(a)_{i+1} =   \Delta(a)_{i-2m}-\Delta(a)_{i+1}.
  \end{equation}
  Hence we can use (\ref{eq:one}) to prove (\ref{eq:three}) when
  $i- 2m \ge s$ and $ i \le 2mn-s$, i.e., in the range
  $s+ 2m \le i \le 2mn-s$.

  Since we by the anti-symmetry of $\Delta(b)_i$ do not need to look at
  $i\ge m(n+1)$ and since we have that $2mn-s \ge m(n+1)$ by the
  assumption that $mn-s\ge m$, it only remains to prove
  (\ref{eq:three}) for $i$ in the range $s+m\le i < s+2m$. In order to
  do this, we use (\ref{eq:two}) and for $0 \le j\le m$, we have that
  \[
    \Delta(a)_{s-m+j}\ge \Delta(a)_{s+m-j+1}. 
  \]
  Moreover, by (\ref{eq:one}), we have 
 \[
 \Delta(a)_{s+m-j+1} \ge \Delta (a)_{s+m+j+1},
 \]
  which with $i=s+m+j$ in (\ref{eq:compare}) now gives
  \[
    \Delta(b)_{s+m+j} \ge \Delta(b)_{s+m+j+1}, \qquad 0 \le j \le m
  \]
  where we only consider the $j$ that makes $j+m$ an integer. This
  finishes the proof of (\ref{eq:three}).

  For (\ref{eq:four}) we have two cases, $j\ge m$ and $j\le m$. In the
  first case we write
  \[
    \Delta(b)_{s+m-j}\ge \Delta(b)_{s+m+j+1} \quad \Longleftrightarrow
    \quad a_{s+m-j}-a_{s-m-j-1}\ge a_{s+m+j+1}-a_{s-m+j}
  \]
  and the latter can be written
  \[
    \Delta(a)_{s+m-j} + \cdots + \Delta(a)_{s-m-j} \ge
    \Delta(a)_{s+m+j+1}+ \cdots + \Delta(a)_{s-m+j+1}.
  \]
  This holds term-wise because of (\ref{eq:two}) if $j\ge m$.

  For the second case, we write
  \[
    \Delta(b)_{s+m-j}\ge \Delta(b)_{s+m+j+1} \quad \Longleftrightarrow
    \quad a_{s-m+j}-a_{s-m-j-1}\ge a_{s+m+j+1}-a_{s+m-j}
  \]
  and the latter can be written as
  \[
    \Delta(a)_{s-m-j} + \cdots + \Delta(a)_{s-m+j} \ge
    \Delta(a)_{s+m+j+1}+ \cdots + \Delta(a)_{s+m-j+1}
  \]
  and again this holds term-wise because of (\ref{eq:two}) when
  $j\le m$. This finishes the proof of (\ref{eq:four}).
\end{proof}

Even though we do have the WLP for $R_{3,4,d}$, we start the induction
with this as the base case. The following lemma gives us the value
of $s$ that can be used in the induction step of Lemma~\ref{lemma:1}.

\begin{lm}\label{lemma:2}
  For $d>1$ let
  $a_0+a_1t + \cdots + a_{4(d-1)}t^{4(d-1)} =
  (1+t+\cdots+t^{d-1})^4$. Then for
  $s = \left\lfloor \frac{4(d-1)}{3}\right\rfloor$ we have that
  \[
    \Delta(a)_{j}\ge \Delta(a)_{j+1}, \qquad s\le j\le 4(d-1)-s
  \]
  and \[\Delta(a)_{s-j}\ge \Delta(a)_{s+j+1},\qquad 0 \le j\le s.\]
\end{lm}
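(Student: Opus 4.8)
The plan is to analyze the coefficient sequence of $(1+t+\cdots+t^{d-1})^4$ directly, exploiting its well-understood unimodality and symmetry. Write $a_i$ for the coefficients, so that $a_i$ counts the number of ways to write $i$ as an ordered sum of four integers each in $\{0,1,\dots,d-1\}$. This sequence is symmetric about its center $2(d-1)$, is strictly unimodal, and its difference sequence $\Delta(a)_i = a_i - a_{i-1}$ is anti-symmetric about $2(d-1)+\tfrac12$, positive for $i \le 2(d-1)$ and negative afterwards. The quantity we must control is $\Delta(a)_i - \Delta(a)_{i+1} = 2a_i - a_{i-1} - a_{i+1}$, the negative of the second difference. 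Condition~\eqref{eq:one} then says that the sequence $a_i$ is concave on the symmetric window $s \le i \le 4(d-1)-s$, and condition~\eqref{eq:two} is a comparison of differences reflected across the point $s+\tfrac12$.

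First I would obtain a clean closed form for the second difference. Since $\Delta(a)_i - \Delta(a)_{i+1}$ is the coefficient of $t^{i+1}$ in $(1-t)^2(1+t+\cdots+t^{d-1})^4 = (1-t^d)^2(1+t+\cdots+t^{d-1})^2$, and the coefficients of $(1+t+\cdots+t^{d-1})^2$ form the simple triangular sequence $1,2,\dots,d-1,d,d-1,\dots,2,1$, I can read off $2a_i - a_{i-1} - a_{i+1}$ explicitly as a sum of at most three terms of that triangular sequence, with shifts by $0$, $d$, and $2d$. This reduces the concavity statement~\eqref{eq:one} to verifying that an explicit piecewise-linear expression in $i$ is nonnegative on the stated range, which is a finite and transparent computation once the breakpoints (at multiples of $d$) are located relative to $s = \lfloor 4(d-1)/3\rfloor$.

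For~\eqref{eq:two}, I would use the anti-symmetry of $\Delta(a)$ about $2(d-1)+\tfrac12$ together with unimodality. The point $s = \lfloor 4(d-1)/3 \rfloor$ sits to the left of the center $2(d-1)$, so the reflection $i \mapsto 2s+1-i$ maps indices near $s$ to other indices still on the increasing side or just past it; the inequality $\Delta(a)_{s-j} \ge \Delta(a)_{s+j+1}$ then follows by comparing $\Delta(a)$ at two points and invoking either monotonicity of $\Delta(a)$ established via~\eqref{eq:one} already proven, or a direct sign argument where the reflected index crosses the center. The role of the specific value $s=\lfloor 4(d-1)/3\rfloor$ is exactly to guarantee that the concavity window starts late enough that these comparisons are valid; I expect the choice to be pinned down by requiring the first breakpoint of the second-difference formula to fall at or before $i=s$.

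The main obstacle will be the floor function and the resulting case analysis in $d \bmod 3$ (and possibly the parity of $d$), since $s=\lfloor 4(d-1)/3\rfloor$ and the breakpoints at $d$ and $2d$ interact differently in each residue class; the inequalities are not tight with much room, so the boundary indices $i \approx s$ and $i \approx s+m$ must be checked carefully rather than dismissed. I would isolate the worst case (the index nearest the start of the window, where concavity is most fragile) and verify the inequality there explicitly, then argue that all interior indices are strictly easier by the piecewise-linear structure. A minor additional care is needed because $s$ may be a half-integer context in the calling lemma, but here $d>1$ makes $4(d-1)$ a nonnegative integer and $s$ an integer, so the indices in both displayed inequalities are automatically integral.
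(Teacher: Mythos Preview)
Your plan is sound and would yield a correct proof, but it differs from the paper's organization in a way worth noting. The paper works one derivative lower: it writes $\Delta(a)_j$ itself as the coefficient of $t^j$ in $(1-t^d)^4/(1-t)^3$, obtaining the piecewise quadratic
\[
g(j)=\binom{j+2}{2}-4\binom{j+2-d}{2}
\]
on the range $d-1\le j\le 2d-2$, and then observes that $g$ is a downward parabola with axis at $j=\tfrac{4d}{3}-\tfrac32$. This single observation handles both displayed inequalities at once: the first because $g$ is decreasing past the axis, and the second because $g(s-j)\ge g(s+j+1)$ is immediate from the symmetry of the parabola together with $s\ge \tfrac{4d}{3}-2$, combined with the easy bound $\Delta(a)_k\ge g(k)$ for $k<d$. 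Your route---computing $\Delta(a)_i-\Delta(a)_{i+1}$ as (up to sign) the coefficient of $t^{i+1}$ in $(1-t^d)^2(1+t+\cdots+t^{d-1})^2$---gives a piecewise \emph{linear} expression, which is indeed clean for the first inequality but leaves the second inequality requiring the summation/cancellation argument and residue-class case analysis you anticipate. The paper's parabolic symmetry shortcuts that entirely.

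Two small points. First, there is a sign slip: the coefficient of $t^{i+1}$ in $(1-t)^2\sum a_k t^k$ is $a_{i+1}-2a_i+a_{i-1}$, the second difference itself, so it is the \emph{negative} of $\Delta(a)_i-\Delta(a)_{i+1}$; harmless for the strategy. Second, your worry about residues modulo $3$ is largely avoidable once you know the quadratic picture: the floor only enters through the single inequality $s\ge \tfrac{4d}{3}-2$, which holds for every $d>1$ without splitting into cases.
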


\begin{proof}
  The coefficients of the polynomial
  $(1-t^d)^3/(1-t)^3 = (1+t+\cdots+t^{d-1})^3$ are unimodal and
  symmetric around degree $3(d-1)/2$. Hence the coefficients of the
  polynomial
  $$(1-t^d)^4/(1-t)^3 =  
  \Delta(a)_0 + \Delta(a)_1 t + \cdots + \Delta(a)_{4d-3}$$ are
  anti-symmetric around $(4(d-2)+1)/2= 2d-2$; having positive
  coefficients up to degree $2d-2$ and thereafter negative
  coefficients satisfying $\Delta(a)_{2d-2-i} = -\Delta(a)_{2d-1 + i}$
  for $0 \leq i \leq 2d-2$.

  We can write down an explicit formula for the positive coefficients
  as
  \[
    \Delta(a)_j = \begin{cases} \binom{j+2}2,& 0\le j\le d-1,\\
      \binom{j+2}{2}-4\binom{j+2-d}{2},& d\le j\le 2d-2.\\
    \end{cases}
  \]
  The second expression can be written as a quadratic polynomial in
  $j$ as
  \[
    g(j) = \binom{j+2}{2}-4\binom{j+2-d}{2} = -\frac32\left(
      j^2-\left(\frac{8d}3-3\right)j + \frac{4d^2}3 - 4d+2 \right)
  \]
  which is symmetric around $j = \frac{4d}3-\frac32$. Thus we have
  that $\Delta(a)_{j}\geq \Delta(a)_{j+1}$ when
  \[
    \frac{j + (j+1)}{2} \ge \frac{4d}{3}-\frac{3}{2}
    \Longleftrightarrow j \geq \frac{4d}3-2 \Longleftrightarrow j \geq
    s
  \]
  for the positive coefficients. For the negative coefficients, we use
  the anti-symmetry to conclude that
  $\Delta(a)_{j}\geq\Delta(a)_{j+1}$ when
  $j \leq s + 2 \cdot (2d-2 - s)=4(d-1)+s.$
    
  Moreover, for $ 0\le j\le s$, we have $\Delta(a)_{s-j}\ge g(s-j)$
  and $\Delta(a)_{s+j+1}= g(s+j+1)$ when $\Delta(a)_{s+j+1}$ is
  positive. Thus it is enough to verify that $g(s-j) \geq
  g(s+j+1)$. Indeed
  \[
    \frac{(s-j)+(s+j+1)}2 \ge \frac{4d}3-\frac32 \Longleftrightarrow s
    \ge \frac{4d}3-2 = \left\lfloor \frac{4(d-1)}{3}\right\rfloor.
  \]
\end{proof}

We now use Lemmas \ref{lemma:1} and \ref{lemma:2} to give an upper
bound on the degree of the expected Hilbert series for $n+2$ general
forms of degree $d$.

\begin{prop} \label{prop:2} An upper bound for the smallest inflection
  point of the Hilbert function of a complete intersection generated
  by $n+2$ forms of degree $d$ in $n+2 \geq 4$ variables is given by
  $\left\lfloor \frac{4(d-1)}{3}\right\rfloor + (n-2)(d-1)/2$, that is
  for $n \geq 2$ we have
  \[\deg \left (\left[\frac{(1-t^d)^{n+2}}{(1-t)^n}\right] \right)
    \leq \left\lfloor \frac{4(d-1)}{3}\right\rfloor + \frac{(n-2)(d-1)}{2}.\]
\end{prop}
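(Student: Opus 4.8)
The plan is to prove Proposition~\ref{prop:2} by induction on $n$, using Lemma~\ref{lemma:1} as the induction step and Lemma~\ref{lemma:2} as the base case, and then translating the combinatorial statement about differences being decreasing into the statement about the inflection point of the Hilbert series. The connection is the observation already set up in the text: the coefficients of $(1-t^d)^{n+2}/(1-t)^n = (1-t)^2(1+t+\cdots+t^{d-1})^{n+2}$ are exactly the second differences of the coefficients of $(1+t+\cdots+t^{d-1})^{n+2}$, so the smallest degree where the Hilbert series becomes non-positive is controlled by the smallest degree where the first difference sequence $\Delta(c)$ of the coefficients $c_i$ of $(1+t+\cdots+t^{d-1})^{n+2}$ starts to decrease. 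Concretely, if I write $\sum c_i t^i = (1+t+\cdots+t^{d-1})^{n+2}$, then $\Delta(c)_{i}-\Delta(c)_{i+1}$ is (up to sign conventions) the coefficient of $t^{i}$ in the bracketed series, so the first non-positive coefficient occurs at the first index $i$ where $\Delta(c)_i \ge \Delta(c)_{i+1}$.

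First I would set up the induction cleanly. Lemma~\ref{lemma:2} gives, for the polynomial $(1+t+\cdots+t^{d-1})^4$ (i.e.\ $n=4$ variables, matching $n+2=4$, so the proposition's parameter $n=2$), that the difference sequence starts decreasing at $s=\lfloor 4(d-1)/3\rfloor$ and satisfies the two hypotheses \eqref{eq:one} and \eqref{eq:two} of Lemma~\ref{lemma:1}. This is precisely the base case. Then each application of Lemma~\ref{lemma:1} multiplies by one more factor of $(1+t+\cdots+t^{d-1})$, raising the exponent from $k$ to $k+1$, and shifts the threshold $s$ by $(d-1)/2 = m$. So after applying the lemma to go from exponent $4$ up to exponent $n+2$, i.e.\ applying it $(n+2)-4 = n-2$ times, the threshold becomes
\[
s \;=\; \left\lfloor \frac{4(d-1)}{3}\right\rfloor + (n-2)\cdot\frac{d-1}{2}.
\]
I must check that the hypothesis $k(d-1)/2 - s \ge (d-1)/2$ needed to invoke Lemma~\ref{lemma:1} holds at every stage of the induction; this is where I need $s$ to stay in the correct range relative to the half-length of the support, and it should follow from the explicit value of $s$ together with $n\ge 2$, but it is worth verifying carefully at the smallest values since that is exactly the regime where the floor in the base case and the parity of $d$ interact.

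The final step is the translation back to the inflection point. Having established via the induction that, with $c_i$ the coefficients of $(1+t+\cdots+t^{d-1})^{n+2}$, the first difference $\Delta(c)$ is decreasing from index $s$ onward (conclusion \eqref{eq:three} applied with exponent $n+2$), I conclude that the second difference is non-positive at index $s$, hence the coefficient of $t^{s}$ in $(1-t)^2(1+t+\cdots+t^{d-1})^{n+2} = (1-t^d)^{n+2}/(1-t)^n$ is non-positive. By the definition of the bracket truncation, this means
\[
\deg\left(\left[\frac{(1-t^d)^{n+2}}{(1-t)^n}\right]\right) \le s = \left\lfloor \frac{4(d-1)}{3}\right\rfloor + \frac{(n-2)(d-1)}{2},
\]
which is exactly the claim. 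I would also dispose of the small edge cases $d=1$ (where both sides are easily checked directly, since every factor collapses) separately, as Lemma~\ref{lemma:2} is stated only for $d>1$.

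The main obstacle I anticipate is not any single deep idea but the bookkeeping of half-integer indices and the range conditions. The threshold $s$ lives in $\tfrac12\mathbb{N}$, the shift $m=(d-1)/2$ is a half-integer for even $d$, and Lemma~\ref{lemma:1} is carefully phrased so that $j$ ranges only over values making all indices integers; I need to make sure that the accumulated shift after $n-2$ steps lands on an index where the second-difference coefficient is genuinely an integer coefficient of the polynomial, and that the inequality I extract ($\Delta(c)_s \ge \Delta(c)_{s+1}$, i.e.\ a non-positive coefficient) is the one at an \emph{integer} degree. The delicate point is thus confirming that the inductively-tracked $s$ is exactly the smallest index guaranteed to be non-positive, so that the bound is the stated floor expression and not off by a half-step; this requires matching the parity behavior of $\lfloor 4(d-1)/3\rfloor$ against the repeated $+(d-1)/2$ shifts.
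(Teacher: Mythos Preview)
Your proposal is correct and follows essentially the same approach as the paper: induction on $n$ with Lemma~\ref{lemma:2} as the base case and Lemma~\ref{lemma:1} as the induction step, followed by the translation from the decreasing-difference condition to the degree bound via the identity $(1-t^d)^{n+2}/(1-t)^n = (1-t)^2(1+t+\cdots+t^{d-1})^{n+2}$. Your extra care about verifying the range hypothesis $n(d-1)/2 - s \ge (d-1)/2$ at each step and about the half-integer bookkeeping is warranted (the paper leaves these implicit), and your off-by-one in the final translation---the coefficient that becomes non-positive is that of $t^{s+1}$, not $t^s$---is harmless since it only tightens the upper bound.
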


\begin{proof}
  Induction on $n$ with Lemma \ref{lemma:2} as the base case and with
  Lemma \ref{lemma:1} as the induction step proves the statement about
  the inflection point. As seen before, this inflection point
  corresponds to the degree of the expected Hilbert series since
  \[
    \left[\frac{(1-t^d)^{n+2}}{(1-t)^n}\right] =
    \left[(1-t)^2\frac{(1-t^d)^{n+2}}{(1-t)^{n+2}}\right].
  \]
\end{proof}

The upper bound in Proposition \ref{prop:2} is far from being sharp,
and in the proof of Theorem \ref{thm:almost} we will need a better
bound in some cases. For this purpose we will in Lemma \ref{lemma:3}
below give a more general version of Proposition \ref{prop:2} that
could be used with a different base case than Lemma
\ref{lemma:2}. Lemma \ref{lemma:3} also gives the connection to the
WLP that will be used in Theorem \ref{thm:almost}.

\begin{lm} \label{lemma:3}
	
  Let $n \geq 4$, $d \geq 1$, and suppose that $\tilde{s}$ is an
  integer such that the assumptions in Lemma \ref{lemma:1} are
  satisfied. If $\tilde{s} < s(n-2,d)$, then
$$\deg \left (\left[\frac{(1-t^d)^{n+k}}{(1-t)^{n-2+k}}\right] \right)  < s(n-2+k,d) \text{\quad 
  for all even integers $k \geq 0$},$$ and in particular,
$R_{n-1+k,n+k,d}$ fails the WLP for all even integers $k \geq 0$.

\end{lm}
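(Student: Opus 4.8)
The plan is to bound the degree of the bracketed expected series
$\left[\frac{(1-t^d)^{n+k}}{(1-t)^{n-2+k}}\right] = \left[(1-t)^2(1+t+\cdots+t^{d-1})^{n+k}\right]$ from above by $\tilde s + k(d-1)/2$, to bound $s(n-2+k,d)$ from below by $s(n-2,d)+k(d-1)/2$, and then to feed the resulting strict inequality into the WLP reduction (\ref{eq:WLPred}) together with Theorem \ref{thm:degree}.

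First I would iterate Lemma \ref{lemma:1}. Writing $\sum c_i t^i = (1+t+\cdots+t^{d-1})^{n+k}$, the coefficient of $(1-t)^2\sum c_i t^i$ in degree $i$ is $\Delta(c)_i-\Delta(c)_{i-1}$, so the bracket truncates at the first index $j$ with $\Delta(c)_j\ge \Delta(c)_{j+1}$; that is, $\deg\left[\frac{(1-t^d)^{n+k}}{(1-t)^{n-2+k}}\right]$ equals this first such $j$. Since $\tilde s$ satisfies the hypotheses (\ref{eq:one}) and (\ref{eq:two}) of Lemma \ref{lemma:1} at level $n$, and since the conclusions (\ref{eq:three}), (\ref{eq:four}) are exactly those hypotheses at level $n+1$ with the parameter raised by $(d-1)/2$, an induction on $k$ shows that (\ref{eq:one}) holds for $(1+t+\cdots+t^{d-1})^{n+k}$ with parameter $\tilde s + k(d-1)/2$. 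Along the way one checks that the slack condition $(n+k)(d-1)/2-(\tilde s + k(d-1)/2)=n(d-1)/2-\tilde s$ is independent of $k$ and hence inherited from level $n$. As $k$ is even, $\tilde s+k(d-1)/2$ is an integer, so (\ref{eq:one}) gives $\Delta(c)_i\ge\Delta(c)_{i+1}$ already at $i=\tilde s+k(d-1)/2$, whence
\[
\deg\left[\frac{(1-t^d)^{n+k}}{(1-t)^{n-2+k}}\right] \le \tilde s + \frac{k(d-1)}{2}.
\]

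Next I would establish $s(n-2+k,d)\ge s(n-2,d)+k(d-1)/2$ for even $k$. Writing $m=n-2$, the case $m$ odd is an exact identity from $s(m,d)=(m+1)(d-1)/2$, while for $m$ even one uses $\frac{m(m+2)}{m+1}=(m+1)-\frac{1}{m+1}$ to write the unfloored quantity as $\frac{d-1}{2}(m+1)-\frac{d-1}{2(m+1)}$; the difference between levels $m+k$ and $m$ then exceeds $k(d-1)/2$ by a positive amount, and taking floors preserves the inequality because $k(d-1)/2$ is an integer. Combining with $\tilde s<s(n-2,d)$, i.e. $\tilde s\le s(n-2,d)-1$, yields
\[
\deg\left[\frac{(1-t^d)^{n+k}}{(1-t)^{n-2+k}}\right] \le \tilde s + \frac{k(d-1)}{2} < s(n-2,d) + \frac{k(d-1)}{2} \le s(n-2+k,d),
\]
which is the first assertion.

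Finally, for the WLP statement I would set $N=n-1+k$, so that $R_{n-1+k,n+k,d}=R_{N,N+1,d}$ and, by (\ref{eq:WLPred}), WLP for this algebra is equivalent to $\HS(R_{N-1,N+1,d},t)=\left[\frac{(1-t^d)^{N+1}}{(1-t)^{N-1}}\right]$, where $R_{N-1,N+1,d}=R_{n-2+k,n+k,d}$. By Theorem \ref{thm:degree} this Hilbert series has degree $s(n-2+k,d)$, whereas the bracketed series on the right has strictly smaller degree by the previous step; since equal series have equal degrees, WLP must fail. The main obstacle is the bookkeeping in the first step: correctly identifying $\deg\left[\frac{(1-t^d)^{n+k}}{(1-t)^{n-2+k}}\right]$ with the first index at which $\Delta(c)$ stops increasing, verifying that the slack condition of Lemma \ref{lemma:1} persists under iteration, and exploiting the evenness of $k$ to keep the parameter $\tilde s+k(d-1)/2$ an integer so that the bound stays sharp enough to beat $s(n-2+k,d)$.
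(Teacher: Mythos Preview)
Your proposal is correct and follows essentially the same route as the paper: iterate Lemma~\ref{lemma:1} to bound the degree of the bracketed series by $\tilde s + k(d-1)/2$, bound $s(n-2+k,d)$ below by $s(n-2,d)+k(d-1)/2$ via the step inequality $s(m+2,d)-s(m,d)\ge d-1$, and conclude WLP failure from~(\ref{eq:WLPred}). You are somewhat more explicit than the paper in two places---you verify that the slack hypothesis of Lemma~\ref{lemma:1} is preserved under iteration, and you invoke Theorem~\ref{thm:degree} to pin down the actual degree of $\HS(R_{n-2+k,n+k,d},t)$---but these are elaborations of exactly the same argument.
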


\begin{proof}

  If $n$ is even, a computation reveals that
  $s(n+k,d) - s(n-2+k,d) \geq d-1$, and if $n$ is odd, then
  $s(n+k,d) - s(n-2+k,d) = d-1$. From this we
  conclude that $s(n-2,d) + \frac{k+2}{2}(d-1) \leq s(n+k,d)$.

  On the other hand, repeated use of Lemma \ref{lemma:1} gives that
  $\deg \left (\left[\frac{(1-t^d)^{n+k}}{(1-t)^{n-2+k}}\right]
  \right) \leq \tilde{s} + k \cdot \frac{d-1}{2}$. Thus we have

$$\deg \left (\left[\frac{(1-t^d)^{n+k}}{(1-t)^{n-2+k}}\right] \right)  \leq \tilde{s} + k \cdot \frac{d-1}{2} <  s(n-2,d) + ((k-2)+2) \cdot \frac{d-1}{2} \leq s(n+k-2,d).$$

The second part of the proposition follows since by (\ref{eq:WLPred}),
the WLP for $R_{m+1,m+2,d}$ fails if the Hilbert series for
$R_{m,m+2,d}$ does not equal $[(1-t^d)^{m+2}/(1-t)^m]$.

\end{proof}

\begin{tm} \label{thm:almost} Let $n \geq 4, d \geq 2$. Then
  $R_{n,n+1,d}$ fails the WLP except possibly for

\[
  (n,d)\in \{(4, 2),(5, 2),(5, 3),(5, 5),(7, 2),(7, 3),(9, 2),(9,
  3),(11, 2),(11, 3)\}.
\]

\end{tm}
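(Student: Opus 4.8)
The plan is to turn the failure of the WLP into a comparison between the true degree of the Hilbert series of $R_{n-1,n+1,d}$ and the degree it would have were Fr\"oberg's conjecture true. By (\ref{eq:WLPred}), $R_{n,n+1,d}$ fails the WLP whenever $\HS(R_{n-1,n+1,d},t)\ne \left[(1-t^d)^{n+1}/(1-t)^{n-1}\right]$, and since Theorem~\ref{thm:degree} tells us that the left-hand side has degree $s(n-1,d)$, it suffices to show that the expected degree on the right is strictly smaller than $s(n-1,d)$. Lemma~\ref{lemma:3} packages this comparison together with the induction of Lemma~\ref{lemma:1}: if for some base $n_0\ge 4$ there is an integer $\tilde s$ satisfying the hypotheses of Lemma~\ref{lemma:1} for $(1+t+\cdots+t^{d-1})^{n_0}$ with $\tilde s<s(n_0-2,d)$, then $R_{n,n+1,d}$ fails the WLP for every $n\ge n_0-1$ with $n\equiv n_0-1\pmod 2$. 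Thus a single even base case kills a whole tail of odd $n$ and a single odd base case a whole tail of even $n$, and the problem reduces to producing, for each $d$ and each parity, such a base case with $n_0$ as small as possible and recording the pairs that no base case reaches.

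First I would dispose of all large $d$ at once. Iterating Lemma~\ref{lemma:1} from the base Lemma~\ref{lemma:2} reproduces the bound of Proposition~\ref{prop:2}, and a short computation shows that the gap $s(n_0-2,d)-\tilde s$ produced this way approaches $\tfrac{3}{2}(d-1)-\tfrac{4}{3}(d-1)=\tfrac{d-1}{6}$ as $n_0$ grows. Once this surplus comfortably exceeds $1$, i.e. for all $d$ past a small explicit bound, the iterated estimate already yields a valid base case of each parity at some level $n_0=n_0(d)$, so $R_{n,n+1,d}$ fails the WLP for every $n\ge 4$. This leaves only finitely many small $d$.

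For those small $d$ the surplus $\tfrac{d-1}{6}$ is at most of order $1$, and after the integer floors in $\tilde s$ and in the even-$n$ formula for $s$ are accounted for, the estimate coming from Lemma~\ref{lemma:2} is frequently not strict; in such cases it must be replaced by a sharper base case. For each remaining $d$ and each parity I would compute directly the first index at which the first difference of $(1+t+\cdots+t^{d-1})^{n_0}$ starts to decrease — the exact inflection point, which lies strictly below the iterated estimate — at the smallest candidate levels $n_0$, verify the symmetry hypothesis (\ref{eq:two}) of Lemma~\ref{lemma:1} there, and test $\tilde s<s(n_0-2,d)$. Each base case that passes is fed into Lemma~\ref{lemma:3} to remove an entire parity tail; assembling the even and odd tails over the finitely many small $d$ leaves exactly the ten pairs in the statement uncovered.

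The crux, and the main obstacle, is this last step. For small $d$ the true degree $s(n-1,d)$ exceeds the expected degree by only a bounded, often fractional, amount, so whether a given small pair can be shown to fail turns on the precise location of an inflection point of a concrete polynomial power and on how the floor in the even-$n$ formula for $s$ meets the half-integer shifts $(d-1)/2$ that Lemma~\ref{lemma:1} accumulates. Making every one of these boundary inequalities come out right — so that the method reaches each pair outside the stated ten while genuinely stopping at those ten — is where the work lies; the large-$d$ estimate and the passage through Lemma~\ref{lemma:3} are routine by comparison.
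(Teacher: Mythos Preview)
Your proposal is correct and follows essentially the same route as the paper: reduce via (\ref{eq:WLPred}) and Theorem~\ref{thm:degree} to the inequality $\deg\left[(1-t^d)^{n+1}/(1-t)^{n-1}\right]<s(n-1,d)$, use Lemma~\ref{lemma:3} so that a single base case of each parity propagates, handle all large $d$ with the iterated bound from Lemma~\ref{lemma:2}/Proposition~\ref{prop:2}, and for the finitely many small $d$ replace that bound by directly computed inflection points $\tilde s(n_0,d)$ at the lowest workable $n_0$. The paper does exactly this, with explicit cutoffs ($d\ge 7$ for the odd tail, $d\ge 32$ for the even tail) and Macaulay2 checks of $\tilde s$ at $n_0\in\{3,4,5,6,12\}$ for the remaining small $d$, arriving at the same ten exceptional pairs.
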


\begin{proof}
  According to Proposition \ref{prop:2}, we have
$$\deg \left (\left[\frac{(1-t^d)^{n+2}}{(1-t)^n}\right] \right) \leq 
\left\lfloor \frac{4(d-1)}{3}\right\rfloor + (n-2)(d-1)/2,$$ and by
Lemma \ref{lemma:3}, the WLP fails for $R_{n+1,n+2,d}$ if
$$s(n,d) > \left\lfloor \frac{4(d-1)}{3}\right\rfloor + (n-2)\frac{d-1}{2}.$$
Thus for even $n$ the WLP fails for $R_{n+1,n+2,d}$ if

\[
  \left\lfloor \frac{n(n+2)(d-1)}{2(n+1)}\right\rfloor -\left\lfloor
    \frac{4(d-1)}{3}\right\rfloor \ge (n-2)\frac{d-1}{2}+1.
\]
In order to show this, it is sufficient to show that
\[
  \frac{n(n+2)(d-1)}{2(n+1)}-\frac{4(d-1)}{3}\ge (n-2)\frac{d-1}{2}+2
\]
which can be written as
\[
  d \ge 2 + 12\cdot \frac{n+1}{n-2},
\]
which for $n=4$ gives $d\ge 32$.

In the same way, for odd $n>2$, we want to to show that
\[
  \frac{(n+1)(d-1)}{2} -\left\lfloor\frac{4(d-1)}{3}\right\rfloor \ge
  (n-2)\frac{d-1}{2}+1
\]
and here it is sufficient to show that
\[
  \frac{(n+1)(d-1)}{2} -\frac{4(d-1)}{3} \ge (n-2)\frac{d-1}{2}+1
\]
which is equivalent to $d \ge 7$.

Thus to this point we have by Lemma \ref{lemma:3} that $R_{n+1,n+2,d}$
fails for even $n \geq 4$ and $d \geq 32$, and for odd $n \geq 3$ and
$d \geq 7.$

For the remaining cases we introduce the notation
\[
  \tilde s(n,d) = \min \{ s \colon s \text{ satisfies the hypotheses
    in Lemma~\ref{lemma:1}}\}.
\]

For $n=3$, we check with Macaulay2 \cite{M2} that in the range
$2\le d <7$ except for $d=2$, we have $s(3,d) > \tilde{s}(3,d)$, so by
Lemma \ref{lemma:3}, the WLP for $R_{n+1,n+2,d}$ fails for all odd
$n \geq 3$ and $d>2$.  Since $s(5,2)=3>\tilde s(5,2)=2$, we also get
that the WLP for $R_{n+1,n+2,d}$ fails for all odd $n\ge 5$ and $d=2$.

For $n=4$, we check that $s(4,d) > \tilde s(4,d)$ in the range
$2\le d <32$ except for $d \in \{ 2,3,5\}$. Since
$s(6,5)=13>\tilde s(6,5)=12$, we have that the WLP for $R_{n+1,n+2,d}$
fails for all even $n\ge 6$ and $d>3$ according to Lemma
\ref{lemma:3}. For $d\in \{2,3\}$ and even $n$, we need to go to
$n=12$ to get $s(12,2)=6 > \tilde s(12,2)=5$ and
$s(12,3)=12>\tilde s(12,3)=11$.

Thus we have shown that the WLP for $R_{n,n+1,d}$ fails for all
$n\ge 4$ and $d\ge 2$ except possibly for
\[
  (n,d)\in \{(4, 2),(5, 2),(5, 3),(5, 5),(7, 2),(7, 3),(9, 2),(9,
  3),(11, 2),(11, 3)\}.
\]
\end{proof}

\section{Explicit formulas, the remaining cases, and the proof of
  Theorem \ref{thm:main}} \label{sec:sporadic} As we saw from the
previous section there there are ten cases to consider in order to
finish the proof of our main theorem. Four of them do satisfy the WLP,
and now we have to deal with the remaining cases that are
\[(n,d) \in \{(5,5),(7,3),(9,2),(9,3),(11,2),(11,3)\}.\] The case
$(5,5)$ was handled by Migiliore, Miró-Roig and Nagel in \cite{MMN},
the case $(7,3)$ by Ilardi and Vallès in \cite{edim7} and the cases
$(9,2)$ and $(11,2)$ by Sturmfels and Xu~\cite{squares}. Thus there
are two remaining cases: $(n,d) = (9,3)$ and $(n,d) = (11,3)$. We will
now deal with these two cases, but we will also give new arguments for
the other four since the method we use is the same.

We will for each case provide a set of elements of generators for the
inverse system in the top degree that shows that the Hilbert function
of $R_{n-1,n+1,d}$ is not the one expected from the Fr\"oberg
conjecture.

In order to do this we start by establishing an explicit formula for
the form of degree $(n+1)/2$ in $\Bbbk[X_1,X_2,\dots,X_n]$ that is
annihilated by the squares of $n+2$ general linear forms when $n$ is
odd. We will do this in two different ways with different sets of
parameters. The two versions are useful in different situations. In
the first version, we observe that for $n+2$ general forms we can by a
change of variables assume that $n$ are the variables and one is the
sum of the variables. The last form will have general coefficients.

Observe that the references to the result by Sturmfels and
Xu~\cite{squares} in Section~\ref{sec:degree} can be replaced by the
use of Theorem~\ref{thm:formula} to get a completely self-contained
proof of our main result.

We will in the following use $V(y_1,y_2,\dots,y_m)$ to denote the
Vandermonde determinant in variables $y_1,y_2,\dots,y_m$.

\begin{tm}\label{thm:formula}
  Let $ n = 2k-1$ for a positive integer $k$. The form
  \[\begin{split}
      F = \det \begin{bmatrix} X_1 & a_1 X_1& a_1^2 X_1& \cdots &
        a_1^{k-1} X_1& a_1 & a_1^2 & \cdots &
        a_1^{k-1}\\
        X_2 &a_2 X_2& a_2^2 X_2& \cdots & a_2^{k-1}X_2 & a_2 & a_2^2 &
        \cdots &
        a_2^{k-1}\\
        \vdots& \vdots & \vdots& \ddots& \vdots& \vdots&
        \vdots&     \ddots&       \vdots\\
        X_n & a_n X_n& a_n^2 X_n& \cdots & a_n^{k-1} X_n& a_n &a_n^2& \cdots & a_n^{k-1}\\
      \end{bmatrix} \\= \frac{1}{k!(k-1)!}\sum_{\sigma\in \mathfrak
        S_n} \operatorname{sgn}(\sigma) V(a_{\sigma_1},
      a_{\sigma_{2}},\dots, a_{\sigma_{k}}) V(a_{\sigma_{k+1}},
      a_{\sigma_{k+2}},\dots, a_{\sigma_{n}}) \prod_{j=k+1}^n
      a_{\sigma_j} \prod_{j=1}^k X_{\sigma_j}
    \end{split}
  \]
  is the unique form of degree $k$ in $\Bbbk[X_1,X_2,\dots,X_n]$ that
  is annihilated by the squares of the linear forms
  $x_1,x_2,\dots,x_n,x_1+x_2+\dots +x_n,a_1x_1+a_2x_2+\dots+a_nx_n$.
\end{tm}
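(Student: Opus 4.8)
The plan is to verify three things about the proposed form $F$: that it is annihilated by the squares of each of the $n+2$ prescribed linear forms, that it has degree exactly $k$, and that such a form is unique (up to scalar). The degree claim is immediate from the determinantal expansion, since each term is a product $\prod_{j=1}^k X_{\sigma_j}$ of $k$ distinct variables, so $F$ is a squarefree form of degree $k = (n+1)/2$. Uniqueness will follow from a dimension count: by Proposition~\ref{prop:nodd} together with equation~(\ref{eq:inverse}), the inverse system of $n+2$ squares of general linear forms in $n$ variables is nonzero in degree $s(n,2) = (n-1)/2 \cdot \lfloor \cdots \rfloor$, and in fact the cited result of Sturmfels and Xu gives that this dimension equals $1$ when $n$ is odd. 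So once I exhibit one nonzero annihilated form of the correct degree, uniqueness is automatic; the content is entirely in the annihilation.

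First I would confirm that $F$ is annihilated by $x_i^2$ for each variable $x_i$. Since $x_i = \partial/\partial X_i$, applying $x_i^2$ kills every monomial in which $X_i$ appears to the first power or not at all, and $F$ is multilinear in the $X$'s, so $x_i^2 \circ F = 0$ trivially. The real work is the two remaining forms $\ell = x_1 + \cdots + x_n$ and $\ell' = a_1 x_1 + \cdots + a_n x_n$. For these I would use the general Leibniz rule recorded in the preliminaries. The key structural observation is that $F$ is a determinant whose $i$th row is linear in $X_i$, so $\ell \circ F$ and $\ell' \circ F$ each act column-by-column: $\ell \circ X_i$ replaces a column of $X$-entries by the vector $(1,\dots,1)^{\!\top}$ scaled by the appropriate power of $a_i$, and $\ell' \circ X_i$ replaces it by $(a_1,\dots,a_n)^{\!\top}$. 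Concretely, differentiating the determinant picks out, by multilinearity in the columns, the single column containing the $X$'s and replaces each $X_i$ by $1$ (for $\ell$) or by $a_i$ (for $\ell'$). I would then argue that applying the square $\ell^2$ or $(\ell')^2$ produces a determinant with two equal columns, forcing it to vanish.

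The main obstacle is bookkeeping the differentiation of the determinant cleanly. The determinant has a single column of $X$-entries (the first column, with entries $X_i$) together with columns $a_i^p X_i$ for $1 \le p \le k-1$ and the constant columns $a_i^p$; written this way the $X$-dependence is spread across the first $k$ columns. The cleanest route is to use the explicit symmetrized formula on the right-hand side, where $F = \frac{1}{k!(k-1)!}\sum_{\sigma} \operatorname{sgn}(\sigma)\, V(a_{\sigma_1},\dots,a_{\sigma_k})\, V(a_{\sigma_{k+1}},\dots,a_{\sigma_n}) \prod_{j=k+1}^n a_{\sigma_j} \prod_{j=1}^k X_{\sigma_j}$, and apply $\ell$ or $\ell'$ directly to the squarefree monomials $\prod_{j=1}^k X_{\sigma_j}$. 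Applying $\ell$ once replaces one $X$ by $1$, i.e.\ lowers the $X$-degree to $k-1$, and applying $\ell$ a second time must be shown to annihilate; I expect the vanishing to come from the antisymmetry of the Vandermonde factors under swapping an index out of the first group with one into it, which is exactly the mechanism that makes a determinant with a repeated column vanish. I would verify the equivalence of the two expressions for $F$ by the Cauchy–Binet or Laplace expansion of the determinant, splitting the $n$ columns into the $k$ columns carrying $X$'s and the $k-1$ purely numerical columns, which produces precisely the product of two Vandermonde determinants indexed by the complementary subsets $\{\sigma_1,\dots,\sigma_k\}$ and $\{\sigma_{k+1},\dots,\sigma_n\}$.

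Finally, I would assemble these pieces: the trivial vanishing under each $x_i^2$, the vanishing under $\ell^2$ and $(\ell')^2$ via the repeated-column argument, the degree computation, and the one-dimensionality from Sturmfels--Xu, to conclude that $F$ is the unique such form. I would keep an eye on the normalization constant $\tfrac{1}{k!(k-1)!}$, which merely records how many times each squarefree monomial $\prod X_{\sigma_j}$ is counted across the $k!$ orderings of the first group and $(k-1)!$ orderings of the second; it does not affect any of the annihilation arguments but is needed to match the determinant exactly.
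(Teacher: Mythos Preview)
Your proposal is correct and takes essentially the same route as the paper: squarefreeness handles the $x_i^2$, the Leibniz rule for derivatives of a determinant applied to $\ell_{n+1}^2$ and $\ell_{n+2}^2$ yields a sum of determinants each with a repeated column (substituting $X_i\mapsto 1$, resp.\ $X_i\mapsto a_i$, in two of the first $k$ columns always duplicates one of the last $k-1$ constant columns), and the equality of the two displayed expressions is the generalized Laplace expansion along the first $k$ columns. Two minor remarks: the paper stays with the determinant form throughout, so your detour through the symmetrized sum is unnecessary once you keep straight that the $X$-dependence sits in the first $k$ columns rather than a single one; and for uniqueness the paper invokes Nagel--Trok rather than Sturmfels--Xu, which matters for the paper's internal logic since Theorem~\ref{thm:formula} is meant to \emph{replace} the Sturmfels--Xu citation used earlier.
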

\begin{proof}
  By Nagel and Trok~\cite{NT} there is a unique such form and it is
  sufficient for us to prove that this particular form is annihilated
  by the squares of the linear forms.  The equality between the two
  formulas follows from the generalized Laplace expansion over the
  first $k$ columns. Since the form is square-free, it is annihilated
  by the squares of the variables and it remains for us to check that
  it is annihilated by the squares of the last two linear forms
  $\ell_{n+1} = x_1+x_2+\dots +x_n$ and
  $\ell_{n+2} = a_1x_1+a_2x_2+\dots+a_nx_n$.

  We can start by computing
  $\ell_{n+1}^2\circ F = (x_1+x_2+\dots +x_n)^2\circ F$ where we by
  the Leibniz rule for determinants get a sum over terms where we
  substitute $X_i=1$, for $i=1,2,\dots, n$ in two of the first $k$
  columns. In all these terms, there will be a repeated column so all
  terms are zero.

  In the same way we get that
  $\ell_{n+1}^2\circ F = (a_1x_1+a_2x_2+\dots +a_nx_n)^2\circ F =
  0$. This time we substitute $X_i = a_i$, for $i=1,2,\dots,n$ in two
  of the first $k$ columns, which again results in repeated columns.
\end{proof}

For the second version of this formula, we observe that $n+2$ general
points in $\mathbb P^{n-1}$ are on a rational normal curve and we can
by a change of coordinates assume that this curve is the moment curve
with parametrization $(1:t:t^2:\cdots:t^{n-1})$. Thus we can assume
that the $n+2$ linear forms are given by
$\ell_i = \sum_{j=1}^n \alpha_i^{j-1}x_j$, for $i=1,2,\dots,n+2$,
where $\alpha_1,\alpha_2,\dots,\alpha_{n+2}$ are general elements of
the field $\Bbbk$.

     \begin{tm}
       For $n = 2k-1$, the form of degree $k = (n+1)/2$ that is
       annilated by the squares of the linear forms
       $\ell_i = \sum_{j=1}^{n} \alpha_i^{j-1} x_{j}$, for
       $i = 1,2,\dots,n+2$ is given by
       \[
         F = \det \begin{bmatrix} x_1&x_2&x_3&\cdots&x_k\\
           x_2&x_3&x_4&\cdots&x_{k+1}\\
           x_3&x_4&x_5&\cdots&x_{k+2}\\
           \vdots&\vdots&\vdots&\ddots&\vdots\\
           x_k&x_{k+1}&x_{k+2}&\cdots&x_{n}\\
         \end{bmatrix}.
       \]
     \end{tm}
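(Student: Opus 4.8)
The plan is to reduce to a single clean computation, exactly as in the proof of Theorem~\ref{thm:formula}: Nagel and Trok guarantee a unique (up to scalar) form of degree $k=(n+1)/2$ in the inverse system of $\langle \ell_1^2,\dots,\ell_{n+2}^2\rangle$, so it suffices to check that the Hankel matrix $M=\big(X_{p+q-1}\big)_{p,q=1}^{k}$ has nonzero determinant $F=\det M$ and that $\ell_i^2\circ F=0$ for every $i$. That $F\neq 0$ is immediate: the monomial $X_k^{\,k}$ arises only from the anti-diagonal permutation $\sigma(p)=k+1-p$, the unique $\sigma$ with $p+\sigma(p)-1=k$ for all $p$, so it occurs in $F$ with coefficient $\operatorname{sgn}(\sigma)=\pm1$ and cannot cancel; note also $\deg F=k$. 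A pleasant feature here, in contrast to Theorem~\ref{thm:formula}, is that all $n+2$ forms share the same shape $\ell=\sum_{j=1}^n\alpha^{j-1}x_j$, so a single computation uniform in $\alpha$ disposes of all of them at once.

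For the core computation, fix one such $\ell=\sum_{j=1}^n\alpha^{j-1}x_j$; as a constant-coefficient combination of the partials $x_m=\partial/\partial X_m$ it is a derivation of $\Bbbk[X_1,\dots,X_n]$, so it acts on the determinant by the columnwise product rule
\[
\ell\circ\det M=\sum_{q=1}^{k}\det\big(M_1,\dots,\ell\circ M_q,\dots,M_k\big),
\]
where $M_q$ is the $q$-th column and $\ell\circ M_q$ denotes entrywise application. The key observation is that column $q$ is $(X_q,X_{q+1},\dots,X_{q+k-1})^{\!\top}$ and $\ell\circ X_{q+p-1}=\alpha^{q+p-2}$, whence
\[
\ell\circ M_q=\big(\alpha^{q-1},\alpha^{q},\dots,\alpha^{q+k-2}\big)^{\!\top}=\alpha^{q-1}w,\qquad w:=\big(1,\alpha,\dots,\alpha^{k-1}\big)^{\!\top}.
\]
Thus $\ell$ collapses every column onto a scalar multiple of the single fixed vector $w$.

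The conclusion then follows by applying $\ell$ a second time. From the display above, $\ell\circ F=\sum_q\alpha^{q-1}\det(\dots,w,\dots)$ with $w$ in slot $q$. Applying $\ell$ to such a term, the column $w$ is annihilated, since its entries are the scalars $\alpha^{j-1}$, while each remaining column $q'\neq q$ becomes $\alpha^{q'-1}w$; every resulting determinant therefore carries $w$ in two distinct columns and vanishes. Hence $\ell^2\circ F=0$, and since the argument is uniform in $\alpha$ it applies to all of $\ell_1,\dots,\ell_{n+2}$, identifying $F$ as the unique form sought. I expect no serious obstacle; the one point requiring care is recognizing this rank-one collapse—$\ell$ maps all columns into the line spanned by $w$—which forces a repeated column upon the second differentiation, exactly mirroring the repeated-column mechanism used in the proof of Theorem~\ref{thm:formula}.
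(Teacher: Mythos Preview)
Your proof is correct and follows essentially the same approach as the paper: invoke Nagel--Trok for uniqueness, then use the Leibniz rule for differentiating a determinant to see that applying $\ell$ twice produces a repeated column (the paper phrases it in terms of rows, but the Hankel matrix is symmetric, so this is the same observation). Your explicit check that $F\neq 0$ via the anti-diagonal monomial is a small addition the paper leaves implicit.
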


     \begin{proof}
       Again we use that Nagel and Trok~\cite{NT} have shown that
       there is a unique such form and it is sufficient for us to
       prove that this determinant is annihilated by the squares of
       the linear forms.

       We observe that $\ell_i^2\circ F$ is given by a sum with signs
       over all ways of substituting $x_j$ with $\alpha_i^{j-1}$ in
       two of the rows of the matrix. These two rows become linearly
       dependent and thus $\ell_i^{2}\circ F = 0$ for
       $i = 1,2,\dots,n+2$.
     \end{proof}

     The advantage of this second version is that the formula does not
     depend on the parameters. Moreover, we can also use it to find
     formulas for the unique forms that are annihilated by some of the
     linear forms and the squares of the remaining linear forms.
     
     \begin{tm}\label{thm:formula3}
       For $0<k\le (n+1)/2$, the unique form of degree $k$ that is
       annilated by the linear forms
       $\ell_i = \sum_{j=1}^{n} \alpha_i^{j-1} x_{j}$, for
       $i = 1,2,\dots,n-2k+1$ and by the squares of the remaining
       $2k+1$ linear forms is given by
       \[
         F = \det \begin{bmatrix}
           1&\alpha_1&\alpha_1^2&\cdots&\alpha_1^{n-k}\\
           1&\alpha_2&\alpha_2^2&\cdots&\alpha_2^{n-k}\\
           \vdots&\vdots&\vdots&\ddots&\vdots\\
           1&\alpha_{n-2k+1}&\alpha_{n-2k+1}^2&\cdots&\alpha_{n-2k+1}^{n-k}\\
           x_1&x_2&x_3&\cdots&x_{n+1-k}\\
           x_2&x_3&x_4&\cdots&x_{n+2-k}\\
           \vdots&\vdots&\vdots&\ddots&\vdots\\
           x_k&x_{k+1}&x_{k+2}&\cdots&x_{n}\\
         \end{bmatrix}.
       \]
     \end{tm}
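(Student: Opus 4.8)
The plan is to adapt the proof strategy that worked for the previous two theorems, which establish analogous determinantal formulas. The key structural fact I would rely on is that Nagel and Trok have shown uniqueness: there is a unique form of degree $k$ annihilated by the linear forms $\ell_i$ for $i=1,\dots,n-2k+1$ together with the squares of the remaining $2k+1$ forms. This follows because the inverse system in degree $k$ of the ideal generated by $n-2k+1$ general linear forms and $2k+1$ squares of general linear forms is one-dimensional in the relevant degree; indeed $s(n-2k+1+(2k+1)-1, \ldots)$ reduces to the case verifying the Hilbert function value is exactly $1$. Given uniqueness, it suffices to exhibit one nonzero form with the required annihilation properties, and the proposed determinant is my candidate.

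First I would verify annihilation by the squares of the $2k+1$ forms that carry the $x_j$-rows. The bottom $k$ rows of the matrix form a Hankel-type block in the variables $x_1,\dots,x_n$, exactly as in the preceding theorem. Applying $\ell_i^2 = \left(\sum_{j=1}^n \alpha_i^{j-1} x_j\right)^2$ and expanding via the Leibniz rule for the determinant produces a signed sum over all ways of performing the substitution $x_j \mapsto \alpha_i^{j-1}$ in two of the $k$ bottom rows. After such a substitution, a bottom row whose entries are $x_m, x_{m+1}, \dots$ becomes $\alpha_i^{m-1}, \alpha_i^{m}, \dots$, i.e. the vector $\alpha_i^{m-1}(1,\alpha_i,\alpha_i^2,\dots)$. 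Two such substituted rows are therefore scalar multiples of one another, so each term has a repeated (proportional) row and vanishes; hence $\ell_i^2 \circ F = 0$ for the forms $i = n-2k+2, \dots, n+2$ associated with the bottom block.

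Next I would treat the $n-2k+1$ forms $\ell_i$ that index the top Vandermonde rows. Here I would apply $\ell_i$ (the first power, not the square) and again use the Leibniz rule. Acting by $\ell_i$ produces a sum over single-row substitutions $x_j \mapsto \alpha_i^{j-1}$. When the substitution lands in one of the bottom Hankel rows, that row becomes $\alpha_i^{m-1}(1,\alpha_i,\dots,\alpha_i^{n-k})$, which is exactly $\alpha_i^{m-1}$ times the $i$-th top Vandermonde row $(1,\alpha_i,\alpha_i^2,\dots,\alpha_i^{n-k})$ already present in the matrix; the resulting determinant has two proportional rows and vanishes. When the substitution lands in a top row it kills that row (the Vandermonde rows are constant in the $x_j$), contributing nothing to $\ell_i \circ F$. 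Thus every term vanishes and $\ell_i \circ F = 0$ for $i = 1,\dots,n-2k+1$.

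The main obstacle I anticipate is bookkeeping rather than conceptual difficulty: I must confirm that the matrix is genuinely square of the correct size and that the degree of $F$ is exactly $k$. The matrix has $n-2k+1$ Vandermonde rows plus $k$ Hankel rows, giving $n-k+1$ rows, and the columns are indexed $0,\dots,n-k$, also $n-k+1$ of them, so it is square. Only the bottom $k$ rows contain the variables $x_j$, each linearly, and the Laplace expansion along these rows shows $F$ is homogeneous of degree $k$ in the $X$-variables (equivalently $x$-variables), as required. I would also need to check $F$ is not identically zero, which follows because the leading Hankel-times-Vandermonde coefficient is generically nonzero for general $\alpha_i$; combined with the Nagel–Trok uniqueness this pins down $F$ as the desired form.
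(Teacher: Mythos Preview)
Your proposal is correct and follows essentially the same route as the paper: cite Nagel--Trok for uniqueness, then verify annihilation by checking that the Leibniz expansion of $\ell^2\circ F$ (respectively $\ell_i\circ F$) produces determinants with two proportional rows among the Hankel block (respectively one substituted Hankel row proportional to the already-present $i$-th Vandermonde row). The paper's write-up is terser and notes in passing that $\ell^2\circ F=0$ in fact holds for \emph{every} form $\ell=\sum_j \alpha^{j-1}x_j$ on the moment curve, not only the last $2k+1$; your extra bookkeeping on the matrix size, the degree of $F$, and the nonvanishing of $F$ is sound and fills in details the paper leaves implicit.
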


   \begin{proof}
     The uniqueness is given by Nagel and Trok~\cite{NT} and it is
     enough for us to show the vanishing.
     
     We have that $\ell^2\circ F=0$ for any linear form
     $\ell = \sum_{j=1}^n \alpha^{j-1}x_j$ for the same reason as in
     the previous theorem. Applying $\ell_i$, where
     $i=1,2,\dots,n-2k+1$, to $F$ gives a sum over the $k$
     determinants we get by replacing $x_j$ with $\alpha_i^{j-1}$ in
     each of the $k$ lowest rows. Hence $\ell_i\circ F = 0$, for
     $i =1,2,\dots,n-2k+1$.
   \end{proof}

   We can now treat the sporadic cases not covered by Theorem
   \ref{thm:almost} where the WLP fails.
   \begin{tm} \label{thm:sporadic} The WLP fails for
     $R_{n,n+1,d}= \Bbbk[x_1,x_2,\dots,x_n]/\langle
     \ell_1^d,\ell_2^d,\dots,\ell_{n+1}^d\rangle$ in the cases
     $(n,d) \in \{ (5,5),(7,3),(9,2),(9,3),(11,2),(11,3)\}$. In
     particular we have that the Hilbert series of $R_{4,6,5}$,
     $R_{6,8,3}$, $R_{8,10,2}$, $R_{8,10,3}$, $R_{10,12,2}$, and
     $R_{10,12,3}$ are
     \[\begin{array}{c}
         1 + 4 t + 10 t^2 + 20 t^3 + 35 t^4 + 50 t^5 +
         60 t^6 + 60 t^7 + 45 t^8 + 14 t^9,\\
         1 + 6 t + 21 t^2 + 48 t^3 + 78 t^4 + 84 t^5 +
         43 t^6,\\
         1 + 8 t + 26 t^2 + 40 t^3 + 16 t^4,\\
         1  + 8 t  + 36 t^2 + 110 t^3 + 250 t^4 + 432 t^5 + 561 t^6 + 492 t^7
         + 171 t^8 ,
         \\
         1 + 10 t + 43 t^2 + 100 t^3 + 121 t^4 + 32 t^5,\\
         \text{and}\\
         1  + 10 t  + 55 t^2 + 208 t^3 + 595 t^4 + 1342 t^5 + 2431 t^6 + 3520 t^7
         + 3916 t^8 + 2860 t^9 +  683 t^{10}\\
       \end{array}
     \]
     which differ in the leading term from $10t^9$, $42t^6$, $15t^4$,
     $135t^8$, $22t^5$ and $88t^{10}$ that are expected by the
     Fr\"oberg conjecture.
   \end{tm}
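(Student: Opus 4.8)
The plan is to reduce everything to a top-degree dimension count via the Weak Lefschetz reduction (\ref{eq:WLPred}). Since $n$ is odd in all six cases, $R_{n,n+1,d}$ fails the WLP precisely when the Hilbert series of $R_{n-1,n+1,d}=R_{m,m+2,d}$, with $m=n-1$ even, differs from the Fr\"oberg series $[(1-t^d)^{m+2}/(1-t)^m]$. By Theorem \ref{thm:degree} the actual series has degree exactly $s(m,d)$, and in each case the Fr\"oberg series is still positive in that degree (the recorded values $10,42,15,135,22,88$). Hence it suffices to prove that the top-degree dimension $\dim_\Bbbk[R_{m,m+2,d}]_{s(m,d)}$ strictly exceeds the Fr\"oberg value; by the duality (\ref{eq:inverse}) this amounts to exhibiting more than that many linearly independent forms of degree $s(m,d)$ in $\Bbbk[X_1,\dots,X_m]$ annihilated by $\ell_1^d,\dots,\ell_{m+2}^d$.

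To build such forms I would place the $m+2$ general linear forms on the moment curve and use the determinants of Theorem \ref{thm:formula3} (only their vanishing is needed, so the parity of $m$ is irrelevant): for each subset $S\subseteq\{1,\dots,m+2\}$ of appropriate size there is a form $F_S$ of degree $(m+1-|S|)/2$ annihilated by $\ell_i$ for $i\in S$ and by $\ell_i^2$ for $i\notin S$. I would then form products $F_{S_1}\cdots F_{S_t}$ of total degree $s(m,d)$. By the general Leibniz rule, $\ell_i^d$ acts on such a product by distributing $d$ copies of $\ell_i$ among the factors; since $F_{S_j}$ absorbs no copies when $i\in S_j$ and at most one when $i\notin S_j$, the pigeonhole principle (exactly as in Section \ref{sec:degree}) gives $\ell_i^d\circ(F_{S_1}\cdots F_{S_t})=0$ whenever every index $i$ is omitted from at most $d-1$ of the sets $S_j$. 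This produces an explicit, combinatorially indexed family of top-degree elements of the inverse system in each of the six cases.

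The crux, and the step I expect to be the main obstacle, is to show that the span of these products has dimension larger than the Fr\"oberg value. Producing annihilated forms is routine, but the determinantal products satisfy many linear relations, and a naive construction undershoots: for $(7,3)$ the forms coming from products of two elements of the degree-$m/2$ square-annihilated space span at most $\binom{2^{m/2}+1}{2}=36$ dimensions, which already falls short of the target $42$, so genuinely multi-factor products with a covering condition $S_1\cup\cdots\cup S_t=\{1,\dots,m+2\}$ must be included. I would pin down the dimension of the span by specializing the general parameters $\alpha_i$ to explicit values and computing the rank of the coefficient matrix of a chosen generating set, a finite computation in each of the six cases. Once the top-degree dimension is shown to exceed the Fr\"oberg value, (\ref{eq:WLPred}) yields the failure of the WLP; the full Hilbert series quoted in the statement can then be confirmed by the same inverse-system dimension count degree by degree, and is recorded only for completeness.
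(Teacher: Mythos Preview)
Your proposal is essentially the paper's own proof: the same reduction via (\ref{eq:WLPred}), the same placement of the linear forms on the moment curve, the same products $F_{\mathcal S_1}\cdots F_{\mathcal S_t}$ from Theorem~\ref{thm:formula3} with a covering/pigeonhole condition, and the same verification of the span's dimension by specializing the $\alpha_i$ and computing in Macaulay2. The only point to sharpen is your last sentence: confirming the \emph{exact} Hilbert series (not just WLP failure) requires pairing the inverse-system lower bound with the upper bound coming from the Hilbert function of the specialized ring, which is how the paper pins down the stated coefficients.
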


\begin{proof}
  We consider the ring $R_{n,n+2,d}$ and denote our set of $n+2$
  general linear forms by
  \[\mathcal L = \{\ell_1,\ell_2,\dots,\ell_{n+2}\} =
    \left\{\sum_{i=1}^n a_1^{i-1}x_i, \sum_{i=1}^n a_2^{i-1}x_i,\dots,
      \sum_{i=1}^n a_{n+2}^{i-1} x_i\right\},\] where
  $a_1,a_2,\dots,a_{n+2}$ are general elements of $\Bbbk$.
  
  Using Theorem~\ref{thm:formula3}, we can find a formula for the
  unique form of degree $k$ that is annihilated by $n-2k+1$ of the
  $n+2$ linear forms in $\mathcal L$ and by the squares of the
  remaining $2k+1$ linear forms in $\mathcal L$. For
  $\mathcal S\subseteq \mathcal L$ of size $n-2k+1$, we denote this
  unique form by $F_{\mathcal S}$. Observe that the coefficients of
  $F_{\mathcal S}$ are polynomials in the parameters
  $a_1,a_2,\dots,a_{n+2}$. In each of the six cases of the theorem, we
  produce a set of forms in the inverse system of $R_{n,n+2,d}$ in the
  top degree such that the dimension of the subspace they span agrees
  with the stated value of the Hilbert function in the socle
  degree. It will be enough to verify this for a specialization of the
  parameters, since a specialization can only lower the
  dimension. Thus this gives a lower bound for the Hilbert function in
  the socle degree. On the other hand, the computation of the Hilbert
  function of $R_{n,n+2,d}$ for a specialization provides an upper
  bound. Since they agree we can make the desired conclusion.

  In all the cases, we use the specialization $a_i=i-1$,
  $i=1,2,\dots,n+2$ to verify the dimension using Macaulay2.

  For $R_{4,6,5}$ we need $14$ linearly independent forms of degree
  $9$. We use forms that can be written as
  $F = F_{\mathcal S_1}F_{\mathcal S_2}F_{\mathcal S_3}F_{\mathcal
    S_4}F_{\mathcal S_5}F_{\mathcal S_6}$ where
  $|\mathcal S_1]=|\mathcal S_2]=|\mathcal S_3] =3$ and
  $|\mathcal S_4]=|\mathcal S_5]=|\mathcal S_6]=1$ such that each
  linear form in $\mathcal L$ is contained in exactly two of the six
  subsets. Observe that the three first factors are linear and the
  three last are quadratic. For each $\ell \in \mathcal L$ we get
  $\ell^5\circ F = 0$ by the pigeonhole principle and the general
  Leibniz rule since $\ell$ annihilates two of the factors and
  $\ell^2$ annihilates the remaining four factors.

  For $R_{6,8,3}$ we need $43$ linearly independent forms of degree
  $6$. We use forms that can be written as
  $F_{\mathcal S_1}F_{\mathcal S_2}F_{\mathcal S_3}F_{\mathcal S_4}$
  where $|\mathcal S_1|=|\mathcal S_2| =5$ and
  $|\mathcal S_3|=|\mathcal S_4| = 3$ each linear form in $\mathcal L$
  is contained in exactly two of the subsets. These forms are
  annihilated by the squares of all linear forms in $\mathcal L$ since
  each linear form annihilates two of the factors and the square of
  the linear form annihilates the remaining two factors.

  For $R_{8,10,2}$ we need $16$ linearly independent forms of degree
  $4$. These can be obtained as
  $F = F_{\mathcal S}F_{\mathcal L\setminus \mathcal S}$ for subsets
  $\mathcal S$ of size five. These are products of two quadrics and
  they are annihilated by the squares of the linear forms in
  $\mathcal L$ since for each $\ell$ in $\mathcal L$ we have that
  $\ell$ annihilates one of the factors and $\ell^2$ annihilates the
  other.

  For $R_{8,10,3}$ we will produce a set of $171$ linearly independent
  forms of degree $8$ that are annihilated by the cubes of the linear
  forms. These forms are obtained as
  $F = F_{\mathcal S_1}F_{\mathcal S_2}F_{\mathcal S_2}F_{\mathcal
    S_2}$ where
  $|\mathcal S_1|=|\mathcal S_2|=|\mathcal S_3|=|\mathcal S_4|=5$ and
  each linear form in $\mathcal L$ is contained in two of the
  subsets. Thus we get that
  $\ell^3\circ (F_{\mathcal S_1}F_{\mathcal S_2}F_{\mathcal
    S_2}F_{\mathcal S_2})=0$ for all $\ell\in \mathcal L$ since $\ell$
  anihilates two of the factors and $\ell^2$ annihilates the remaining
  two.
  
  For $R_{10,12,2}$ we provide a set of $32$ linearly independent
  forms of degree $5$ that are annihilated by the squares of the
  linear forms in $\mathcal L$.  We do this by forms
  $F = F_{\mathcal S}F_{\mathcal L\setminus \mathcal S}$ for subsets
  $\mathcal S$ of size five. Each linear form $\ell$ in $\mathcal L$
  annihilates one of the factors and $\ell^2$ the other. Hence
  $\ell^2\circ F_{\mathcal S}F_{\mathcal L\setminus \mathcal S} =0$.

  For $R_{10,12,3}$ we provide a set of $683$ linearly independent
  forms of degree $10$ that are annihilated by the cubes of the linear
  forms in $\mathcal L$. We do this by forms
  $F = F_{\mathcal S_1}F_{\mathcal S_2}F_{\mathcal S_3}F_{\mathcal
    S_4}$ where $|\mathcal S_1|=|\mathcal S_2|=5$,
  $|\mathcal S_3|=|\mathcal S_4|=7$ and every $\ell\in \mathcal L$ is
  contained in two of the subsets. Now
  $\ell^3\circ(F_{\mathcal S_1}F_{\mathcal S_2}F_{\mathcal
    S_3}F_{\mathcal S_4})=0$ for all $\ell \in \mathcal L$ since
  $\ell$ annihilates two of the factors and $\ell^2$ annihilates the
  remaining two.
\end{proof}

We can now prove Theorem \ref{thm:main}.

\begin{proof}[Proof of Theorem \ref{thm:main}]
  By Theorem \ref{thm:almost} we have that the WLP for $R_{n,n+1,d}$
  fails when $n \geq 4, d \geq 2$ expect possibly for the cases
$$(n,d)\in \{(4, 2),(5, 2),(5, 3),(5, 5),(7, 2),(7, 3),(9, 2),(9,3),(11, 2),(11, 3)\}.$$

In the case $(4,2)$, $(5,2)$, $(5,3)$ and $(7,2)$, we can verify by
one example that they do satisfy the WLP and for the remaining cases
Theorem \ref{thm:sporadic} shows that they fail to satisfy the WLP.

Finally, we refer to the introduction for references to the cases
$n=2$ and $n=3$, and for the cases $n=1$, $d=1$, the WLP is trivially
satisfied.
\end{proof}

\section*{acknowledgements}

  We wish to thank CIRM in Luminy for hosting us during the workshop
  on the Lefschetz properties in October 2019, and to Uwe Nagel for
  giving an inspiring talk on the subject of this paper. We also thank
  the anonymous referee for useful comments that helped us improve the
  presentation of the paper, and in particular, for providing us with
  the proof of Theorem \ref{thm:formula} which significantly
  simplified our original argument.  Finally, computer experiments in
  Macaulay2 were absolutely crucial for our understanding of the
  algebras that we have considered.

\end{document}